\title{Learning complexity of gradient descent and conjugate gradient algorithms}
\author{
Xianqi Jiao, Jia Liu, Zhiping Chen
}
\affil{    
     \footnotesize {School of Mathematics and Statistics, Xi'an Jiaotong University, Shaanxi, China}\\
    jxq00913@stu.xjtu.edu.cn, \{jialiu, zchen\}@xjtu.edu.cn

}
\newtheorem{definition}{Definition}
\newtheorem{lemma}{Lemma}
\newtheorem{theorem}{Theorem}
\newtheorem{corollary}{Corollary}
\newtheorem{assumption}{Assumption}
\begin{document}

\maketitle

\begin{abstract}


Gradient Descent (GD) and Conjugate Gradient (CG) methods are among the most effective iterative algorithms for solving unconstrained optimization problems, particularly in machine learning and statistical modeling, where they are employed to minimize cost functions. In these algorithms, tunable parameters, such as step sizes or conjugate parameters, play a crucial role in determining key performance metrics, like runtime and solution quality. In this work, we introduce a framework that models algorithm selection as a statistical learning problem, and thus learning complexity can be estimated by the pseudo-dimension of the algorithm group. We first propose a new cost measure for unconstrained optimization algorithms, inspired by the concept of primal-dual integral in mixed-integer linear programming. Based on the new cost measure, we derive an improved upper bound for the pseudo-dimension of gradient descent algorithm group by discretizing the set of step size configurations. Moreover, we generalize our findings from gradient descent algorithm to the conjugate gradient algorithm group for the first time, and prove the existence a learning algorithm capable of probabilistically identifying the optimal algorithm with a sufficiently large sample size.
\end{abstract}

%

\section{Introduction}
Gradient descent (GD) and conjugate gradient (CG) methods are widely used, iterative algorithms for effectively solving unconstrained optimization problems, especially in machine learning and statistical modeling to minimize cost functions. Typically in GD \citep{s:05}, each iteration involves calculating the gradient at the current iteration point, multiplying it by a scaling factor (also known as step size), and subtracting it from the current iteration point. With certain assumptions on the problem's convexity, GD converges to a global minimum. Similarly, in CG \citep{h:52}, at each iteration step, the CG algorithm calculates the gradient at the current iteration point, determines the new search direction by combining the gradient of the current step with the previous direction multiplied by the conjugate parameter, and then adds the product of the search direction and the step size to the current iteration point. There are several well-known formulas for the conjugate parameter, including those proposed by 
\citet{h:52}, 
\citet{f:64}, 
\citet{p:69}, and 
\citet{d:99} . While different conjugate parameters yield varying convergence properties, it is well established that for an $n$-dimensional problem, the CG method theoretically converges in at most $n$ iterations for quadratic programs. In recent years, theoretical research on the CG method continues \cite{d:16,l:20,s:20,s:23,a:20}, but most studies focus exclusively on specific iterative methods and their convergence analyses.

Both GD and CG methods are widely utilized in various machine learning and optimization tasks, particularly in the training of models such as neural networks. Understanding the learning complexity of these algorithms is essential for evaluating their scalability in the face of increasing data sizes or model parameters. Furthermore, complexity analysis is closely related to the generalization properties of a learning algorithm, providing insights into the maximum number of samples required to effectively learn the optimal parameters (e.g., step size) within the algorithm. By estimating the error of a particular learning model alongside the learning complexity of the GD/CG methods employed in it, we can establish a robust theoretical guarantee regarding the model's performance. Optimization algorithms problems often have tunable parameters that significantly impact performance metrics such as runtime, solution quality, and memory usage, particularly in models incorporating machine learning techniques. {\it However, in most machine learning models, parameter tuning rarely comes with provable guarantees}. Therefore, {\it Selecting an appropriate step size or conjugate parameter is crucial for both GD and CG methods as they significantly affect convergence speed and generalization performance.} However, determining the appropriate step size and conjugate parameter for a specific type of problem remains an open question. Researchers have approached this challenge by treating the tuning of these parameters as a meta-optimization problem, where the goal is to identify the optimal parameters for a meta-optimizer. 
\citet{m:15} initially proposed a learning-to-learn (L2L) approach, followed by 
\citet{a:16}, who trained neural networks to predict the step size in the GD method. This learning-to-learn approach, also known as meta-learning \citep{l:16} or reversible learning \citep{m:15}, has gained considerable attention in recent years. Building on this framework, 
\citet{d:19} introduced a novel meta-algorithm designed to estimate the bias term online from a family of tasks, where the single task is to apply stochastic gradient descent (SGD) on the empirical risk, regularized by the squared Euclidean distance from a bias vector. 
\citet{h:23} further extended meta-learning by providing guarantees for various well-known game classes, including two-player zero-sum, general-sum, and Stackelberg games, and also proposed results for single-game scenarios within a unified framework. For a comprehensive overview of meta-learning models, we refer readers to a recent survey \citep{h:22}.

Recently, the machine learning community has shown a growing interest in theoretical guarantees. 
\citet{w:21} highlighted that meta-optimization is particularly challenging due to issues like exploding or vanishing meta-gradients and poor generalization if the meta-objective is not carefully selected. They provided meta-optimization guarantees for tuning the step size in quadratic loss functions, demonstrating that a naive objective can suffer from meta-gradient issues and that a carefully designed meta-objective can maintain polynomially bounded meta-gradients. 
\citet{y:22} introduced a stable adaptive stochastic conjugate gradient (SCG) algorithm tailored for the mini-batch setting. This algorithm integrates the stochastic recursive gradient approach with second-order information within a CG-type framework, allowing for the estimation of the step size sequence without requiring Hessian information. This approach achieves the low computational cost typical of first-order algorithms. Additionally, they established a linear convergence rate for SCG algorithms on strongly convex loss functions.

We would like to highlight a learning-theoretic framework for data-driven algorithm design introduced by 
\citet{g:16}, which established sample complexity bounds for tuning the step size in gradient descent. It
comprises four key components: a fixed optimization problem, an unknown instance distribution, a family of algorithms, and a cost function that evaluates the performance of a given algorithm on a specific instance. Building on Gupta’s work, data-driven algorithms have been applied across a wide range of optimization problems. 
\citet{b:17} utilized this framework for configuration problems in combinatorial partitioning and clustering. 
They further extended this approach to other optimization challenges, including tree search algorithms such as the branch-and-bound method \citep{b:18} and the branch-and-cut method \citep{b:22}, as well as for non-convex piecewise-Lipschitz functions \citep{b:21}. Furthermore, 
\citet{a:19} developed an algorithm for pruning problems that learns to optimally prune the search space in repeated computations.

In the vein of research on sample complexity for algorithms, {\it the choice of cost function is equally critical in assessing an algorithm’s performance}. Common options include running time and the objective function value of the solution produced by the algorithm \citep{g:16,b:2021}. In \citet{g:16}, for GD problems, the cost function is defined as the number of iterations (i.e., steps) the algorithm takes to solve a given optimization task. For clustering methods  \citep{b:17}, the cost function is set as the $k$-means or $k$-median objective value, the distance to a ground-truth clustering, or the expected value of the solution. In the branch-and-bound method \citep{b:22}, the cost function is set as the size of the search tree under a given node selection policy. In a word, selecting an appropriate cost function can significantly enhance performance and may also yield stronger theoretical guarantees.

To the best of our knowledge, when learning the parameter in GD, the cost function is typically defined as {\it the number of iterations}. For instance, 
\citet{g:16} show the existence of a learning algorithm that $(1+\epsilon,\delta)$-learns the optimal algorithm in an algorithm set $\mathcal{A}$ using $m=\tilde{O}(H^3/\epsilon^2)$ samples, here $H$ is a given constant. {\it However, extending this result to the CG method meets significant challenges.} Moreover, when the optimization problem is too large or computational resources are limited, calculating the iteration number based cost function additionally becomes impractical.  Furthermore, since the number of iterations must be an integer, the value $1+\epsilon$ cannot be further reduced. 
{\it To address these issues, we introduce a new cost function that sums the distances between the current and optimal values at each iteration.} We demonstrate that, using this cost function, there exists a learning algorithm that $(C+\epsilon,\delta)$-learns the optimal algorithm in the GD algorithm set $\mathcal{A}$ using $m=\tilde{O}(H^3/\epsilon^2)$ samples for any $C > 0$. We further extend the framework to the CG algorithm, and establish that 
there exists a learning algorithm that $(C+\epsilon,\delta)$-learns the optimal algorithm in the CG algorithm set $\mathcal{A}$ using $m=\tilde{O}(H^4/\epsilon^2)$  samples for any $C > 0$.

To conclude, the major contributions of this paper includes:
\begin{enumerate}

\item  We propose a new cost function to more effectively measure the performance of GD and CG algorithms. In detail, the function calculates the sum of distances between the current and optimal values at each iteration.

\item We improve the learning complexity of GD under the new cost function. In detail, using the new cost function, we demonstrate that there exists a learning algorithm that $(C+\epsilon,\delta)$-learns the optimal algorithm within algorithm set $\mathcal{A}$ using $m=\tilde{O}(H^3/\epsilon^2)$ samples for any $C > 0$.

\item To the best of our knowledge, we firstly 
establish the learning complexity result for the CG algorithm.
In detail, we prove that there exists a learning algorithm that $(C+\epsilon,\delta)$-learns the optimal algorithm within the CG algorithm set $\mathcal{A}$ using $m=\tilde{O}(H^4/\epsilon^2)$ samples for any $C > 0$.

\end{enumerate}

The rest of this paper is organized as follows: Section 2 reviews the learning framework and establishes generalization guarantees by analyzing the pseudo-dimension of the algorithm classes. Section 3 summarizes some results from 
\citet{g:16}, introduces our new cost function, and provides the corresponding generalization guarantees for the gradient descent method. Section 4 extends these generalization guarantees to the conjugate gradient method under specific assumptions. Finally, Section 5 concludes the paper.


\section{Background of learning complexity}


In the algorithm selection model, the primary approach involves choosing the best algorithm for a poorly understood application domain by learning the optimal algorithm based on an unknown instance distribution. We adopt the learning complexity framework for data-driven algorithm design introduced by 
\citet{g:16} to analyze the sample complexity bounds for tuning the step sizes of the gradient descent and conjugate gradient algorithms.
We briefly review the learning-theoretic framework, which consists of the following four components:
\begin{itemize}
    \item 
A fixed computational or optimization problem $\Pi$,
 \item An unknown distribution $\mathcal{D}$ over instances $x \in \Pi$,
 \item A set $\mathcal{A}$ of algorithms for solving $\Pi$,
 \item A cost function $c$: $\mathcal{A} \times \Pi \to [0,H]$, measuring the performance of a given algorithm on a given instance. 
\end{itemize}
The choice of cost function is pivotal in determining an algorithm’s performance. Various cost functions apply different criteria to identify the optimal algorithm.

The primary goal of algorithm selection/learning is to identify an algorithm $\mathit{A}_\mathcal{D}$ that minimizes $\mathbf{E}_{x \sim \mathcal{D}}[c(\mathit{A},x)]$ over $\mathit{A}\in \mathcal{A}$.
Learning complexity theory examines the efficiency of the learned algorithm. A commonly used measurement is the concept of $(\epsilon,\delta)$-learning with probability.








\begin{definition}[$(\epsilon,\delta)$-learning with probability {\rm \citep{k:94}}]
A learning algorithm $L$ $(\epsilon,\delta)$-learns the optimal algorithm in $\mathcal{A}$ from m samples if, for every distribution $\mathcal{D}$ over $\Pi$, with probability at least $1-\delta$ over m samples $x_1,x_2,\dots, x_m\sim \mathcal{D}$, L outputs an algorithm $\hat{A}\in \mathcal{A}$ with error at most $\epsilon$.
\end{definition}

Learning complexity essentially measures the performance of an algorithm in out-of-sample scenarios, often referred to as generalization guarantees. It can be estimated using the pseudo-dimension approach applied to the algorithm classes. 

\begin{definition}[Pseudo-dimension of the algorithm class {\citep{p:84}}]
Let $\mathcal{H}$ denote a set of real-valued functions defined on the set $X$. A finite subset $S=\{x_1,x_2,\dots, x_m\}$ of $X$ is shattered by $\mathcal{H}$ if there exist real-valued witnesses $r_1,r_2,\dots, r_m$ such that, for each of the $2^m$ subsets $T$ of $S$, there exists a function $h \in \mathcal{H}$, such that $h(x_i)>r_i$ if and only if $i \in T$ (for $i=1,2,\dots,m$). The pseudo-dimension of  $\mathcal{H}$ is the cardinality of the largest subset shattered by  $\mathcal{H}$.
\end{definition}
It is worth noting that the pseudo-dimension of a finite set $A$ is always at most $\log_2|A|$. To accurately estimate the expectation of all functions in  $\mathcal{H}$ with respect to any probability distribution $\mathcal{D}$ on $\mathcal{X}$, it is sufficient to bound the pseudo-dimension of $\mathcal{H}$.

\begin{theorem}[Uniform convergence {\citep{a:99}}]
Let $\mathcal{H}$ be a class of functions with domain $X$ and range in $[0,H]$, and suppose $\mathcal{H}$ has pseudo-dimension $d_\mathcal{H}$. For every distribution $\mathcal{D}$ over X, every $\epsilon>0$, and every $\delta \in (0,1]$, if
\begin{displaymath}
m \geq k(\frac{H}{\epsilon})^2(d_\mathcal{H}+\ln(\frac{1}{\delta}))
\end{displaymath}
for a suitable constant $k$ , then with probability at least $1-\delta$ over $m$ samples $x_1,x_2,\dots, x_m\sim \mathcal{D}$,
\begin{displaymath}
\Big|(\frac{1}{m}\sum_{i=1}^m h(x_i))-\mathbf{E}_{x \sim \mathcal{D}}[h(x)]\Big|< \epsilon,
\end{displaymath}
for every $h \in \mathcal{H}$.
\end{theorem}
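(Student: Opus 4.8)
The plan is to establish a high-probability tail bound on the uniform deviation $\sup_{h\in\mathcal{H}}\big|\frac{1}{m}\sum_{i=1}^m h(x_i)-\mathbf{E}_{x\sim\mathcal{D}}[h(x)]\big|$ and then invert it to read off a sufficient sample size $m$; this is the classical VC/pseudo-dimension route to uniform convergence. First I would apply the symmetrization (ghost-sample) argument: drawing a second independent sample $x_1',\dots,x_m'\sim\mathcal{D}$, one shows that once $m\epsilon^2$ is a large enough multiple of $H^2$,
\[
P\Big(\sup_{h\in\mathcal{H}}\big|\tfrac{1}{m}\sum_{i=1}^m h(x_i)-\mathbf{E}_{x\sim\mathcal{D}}[h(x)]\big|>\epsilon\Big)\le 2\,P\Big(\sup_{h\in\mathcal{H}}\big|\tfrac{1}{m}\sum_{i=1}^m\big(h(x_i)-h(x_i')\big)\big|>\tfrac{\epsilon}{2}\Big),
\]
which replaces the unknown expectation by a quantity depending only on the two finite samples. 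Inserting Rademacher sign variables $\sigma_i$ then lets me rewrite the right-hand side in terms of $\frac{1}{m}\sum_i\sigma_i\big(h(x_i)-h(x_i')\big)$, so that, after conditioning on the doubled sample, the event becomes measurable with respect to finitely many points.

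Next I would discretize $\mathcal{H}$ on the doubled sample of size $2m$. Conditioning on the sample, only the restriction of $\mathcal{H}$ to these $2m$ points matters, so I would replace $\mathcal{H}$ by a minimal cover of scale proportional to $\epsilon$ in the empirical $L_1$ norm. The crucial ingredient is the combinatorial bound that controls the size of this cover by the pseudo-dimension: by Pollard's real-valued generalization of the Sauer--Shelah lemma, a class of $[0,H]$-valued functions with pseudo-dimension $d_{\mathcal{H}}$ admits a cover of cardinality at most polynomial in $m$ and $H/\epsilon$ with exponent $d_{\mathcal{H}}$, on the order of $\big(emH/(\epsilon\,d_{\mathcal{H}})\big)^{d_{\mathcal{H}}}$. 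This is the step I expect to be the main obstacle, since converting the shattering-with-witnesses definition of pseudo-dimension into a quantitative bound on $L_1$ covering numbers is precisely the technical heart of the theorem and the reason the pseudo-dimension is the right complexity measure here.

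Finally I would run a concentration-plus-union-bound argument over the finite cover. For each representative function the summands $\sigma_i(h(x_i)-h(x_i'))$ lie in $[-H,H]$ and have zero mean, so Hoeffding's inequality yields a deviation bound of the form $2\exp(-c\,m\,\epsilon^2/H^2)$; a union bound over the cover multiplies this by the polynomial covering number, giving
\[
P\Big(\sup_{h\in\mathcal{H}}\big|\tfrac{1}{m}\sum_{i=1}^m h(x_i)-\mathbf{E}_{x\sim\mathcal{D}}[h(x)]\big|>\epsilon\Big)\le C\Big(\frac{mH}{\epsilon}\Big)^{d_{\mathcal{H}}}\exp\!\Big(-\frac{c\,m\,\epsilon^2}{H^2}\Big).
\]
Requiring the right-hand side to be at most $\delta$ and taking logarithms produces a condition of the form $m\epsilon^2/H^2\gtrsim d_{\mathcal{H}}\ln(mH/\epsilon)+\ln(1/\delta)$; absorbing the mild logarithmic dependence on $m$ into the constant then yields the stated sufficient condition $m\ge k(H/\epsilon)^2\big(d_{\mathcal{H}}+\ln(1/\delta)\big)$, which is exactly what we set out to prove.
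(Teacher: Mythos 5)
First, a point of reference: the paper does not prove this statement at all --- it is Theorem 1, quoted as background from \citet{a:99}, and the paper's only obligation to it is the citation. So your sketch can only be judged against the canonical textbook proof, and for the most part it reconstructs that proof faithfully: symmetrization with a ghost sample (valid once $m\epsilon^2$ exceeds a constant multiple of $H^2$), Rademacher randomization to reduce to a statement conditional on the doubled sample, a single-scale empirical $L_1$ cover whose size is controlled through the pseudo-dimension by a Pollard/Haussler bound of the form $O\big((mH/\epsilon)^{d_\mathcal{H}}\big)$, and Hoeffding plus a union bound over the cover. You also correctly identify the covering-number lemma as the technical heart. All of that is sound.

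The genuine gap is in your final inversion step. The tail bound $C\,(mH/\epsilon)^{d_\mathcal{H}}\exp(-c\,m\epsilon^2/H^2)\le\delta$ yields the requirement $m\epsilon^2/H^2\gtrsim d_\mathcal{H}\ln(mH/\epsilon)+\ln(1/\delta)$, and the factor $d_\mathcal{H}\ln(mH/\epsilon)$ cannot be ``absorbed into the constant $k$'': it depends on $m$, $H/\epsilon$ and $d_\mathcal{H}$, and for fixed $k$ it grows without bound as $\epsilon\to 0$, so the stated sample size $m\ge k(H/\epsilon)^2\big(d_\mathcal{H}+\ln(1/\delta)\big)$ is \emph{not} sufficient under your argument. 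What your route actually proves is the weaker bound $m=O\big((H/\epsilon)^2(d_\mathcal{H}\ln(H/\epsilon)+\ln(1/\delta))\big)$ (even using Haussler's $m$-independent refinement $\mathcal{N}_1(\epsilon,\mathcal{H})\le e(d_\mathcal{H}+1)(2eH/\epsilon)^{d_\mathcal{H}}$, which removes the $\ln m$ but not the $\ln(H/\epsilon)$). Obtaining the theorem exactly as stated, with $d_\mathcal{H}+\ln(1/\delta)$ and no logarithmic factor, requires replacing the single-scale cover-plus-union-bound with a chaining argument over a geometrically decreasing sequence of scales, combining Haussler's packing bound at each scale (this is Talagrand-style chaining; the log factor is summed away across scales rather than paid once at the finest scale). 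Either weaken the conclusion to include the $\ln(H/\epsilon)$ factor --- which is all the paper's downstream $\tilde{O}(\cdot)$ bounds actually need, since they already suppress such logarithms --- or invoke the chaining refinement explicitly; as written, the last sentence of your proof claims more than the preceding steps deliver.
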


For the class $\mathcal{A}$, considered as a set of real-valued functions defined on $\Pi$, we analyze its pseudo-dimension as defined above. Finally, we provide one additional definition.
\begin{definition}[Empirical risk minimization (ERM) {\citep{v:91}}]
Fix an optimization problem $\Pi$, a performance measure $c$, and a set of algorithms $\mathcal{A}$. An algorithm $\mathit{L}$ is an empirical risk minimization (ERM) algorithm if, given any finite subset S of $\Pi$, $\mathit{L}$ returns an algorithm from $\mathcal{A}$ with the best average performance on S.
\end{definition}

The concept of empirical risk minimization can be employed to ensure the existence of an algorithm that can be $(\epsilon,\delta)$-learned with probability.
Specifically, 
\citet{g:16} demonstrated that in any algorithm set with pseudo-dimension $d$, one can $(2\epsilon,\delta)$-learn the optimal algorithm with high probability, given a sufficiently large number of samples.
\begin{corollary}\rm{\citep{g:16}}
Fixing parameters $\epsilon>0$, $\delta \in (0,1]$, a set of problem instances
$\Pi$, and a cost function $c$. Let $\mathcal{A}$ be a set of algorithms that has pseudo-dimension $d$ with respect to $\Pi$ and $c$. Then any ERM algorithm $(2\epsilon,\delta)$-learns the optimal algorithm in $\mathcal{A}$ from $m$ samples, where $m$ is defined as in Theorem 1.
\end{corollary}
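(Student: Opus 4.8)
The plan is to view the algorithm set $\mathcal{A}$ as the function class $\mathcal{H} = \{\, h_A : x \mapsto c(A,x) \mid A \in \mathcal{A}\,\}$ defined on $\Pi$ with range in $[0,H]$, whose pseudo-dimension is $d$ by hypothesis, and then to deduce the corollary as a direct consequence of the uniform convergence guarantee of Theorem 1. The key observation is that Theorem 1 controls the deviation between empirical and true averages \emph{simultaneously} for every member of $\mathcal{H}$, which is precisely the property needed to analyze the output of an ERM algorithm.

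First I would fix the sample size $m$ as in Theorem 1 and invoke that theorem for the class $\mathcal{H}$: with probability at least $1-\delta$ over the draw of $x_1,\dots,x_m \sim \mathcal{D}$, the event $\mathcal{E}$ that
$$\Big| \frac{1}{m}\sum_{i=1}^m c(A,x_i) - \mathbf{E}_{x\sim\mathcal{D}}[c(A,x)] \Big| < \epsilon \quad \text{holds for every } A \in \mathcal{A}$$
occurs. I would then condition on $\mathcal{E}$ for the rest of the argument. Let $\hat{A}$ denote the algorithm returned by the ERM procedure, i.e.\ a minimizer of the empirical average $\frac{1}{m}\sum_i c(\cdot,x_i)$, and let $A^\star$ be a true minimizer of the expected cost $\mathbf{E}_{x\sim\mathcal{D}}[c(\cdot,x)]$.

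The heart of the proof is a short chain of inequalities. Applying $\mathcal{E}$ to $\hat{A}$, then the defining optimality of the ERM output, then $\mathcal{E}$ to $A^\star$, yields
$$\mathbf{E}_{x\sim\mathcal{D}}[c(\hat{A},x)] < \frac{1}{m}\sum_{i=1}^m c(\hat{A},x_i) + \epsilon \le \frac{1}{m}\sum_{i=1}^m c(A^\star,x_i) + \epsilon < \mathbf{E}_{x\sim\mathcal{D}}[c(A^\star,x)] + 2\epsilon,$$
so that the excess error $\mathbf{E}_{x\sim\mathcal{D}}[c(\hat{A},x)] - \min_{A\in\mathcal{A}} \mathbf{E}_{x\sim\mathcal{D}}[c(A,x)]$ is at most $2\epsilon$ on the event $\mathcal{E}$. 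Since $\mathcal{E}$ occurs with probability at least $1-\delta$, this is exactly the statement that ERM $(2\epsilon,\delta)$-learns the optimal algorithm.

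I expect the main conceptual obstacle to be justifying that it suffices to control only the two algorithms $\hat{A}$ and $A^\star$ while still obtaining a valid high-probability guarantee: because $\hat{A}$ is itself a random, data-dependent object, a naive pointwise concentration bound for a fixed algorithm would not legitimately apply to it, and it is precisely the uniformity over all of $\mathcal{A}$ supplied by the pseudo-dimension bound in Theorem 1 that closes this gap. A secondary technical point is the existence of the minimizer $A^\star$; if $\mathcal{A}$ is infinite and no exact minimizer exists, I would instead choose $A^\star$ whose expected cost is within an arbitrarily small $\eta>0$ of the infimum and let $\eta \to 0$, which leaves the $2\epsilon$ bound intact.
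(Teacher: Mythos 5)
Your proof is correct and follows exactly the standard argument for this result: the paper itself states the corollary without proof (citing Gupta and Roughgarden, 2016), and the cited source's proof is precisely your route --- apply the uniform convergence of Theorem 1 to the class $\{x \mapsto c(A,x) : A \in \mathcal{A}\}$ and then chain the three inequalities relating $\hat{A}$ and $A^\star$ on the good event. Your remarks on why uniformity is needed for the data-dependent $\hat{A}$, and on replacing $A^\star$ by an $\eta$-approximate minimizer when no exact minimizer exists, are both apt and handled the same way in the standard treatment.
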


\section{Learning complexity of gradient descent algorithm}

\subsection{Learning complexity on number of iterations for gradient descent algorithm}


We apply the aforementioned learning complexity theory to the problem of learning step sizes from samples in an algorithm selection problem for unconstrained optimization. Initially, we restrict the algorithm set to gradient descent algorithms with varying step sizes.

Recall the basic gradient descent algorithm for minimizing a function $f$ given an initial point $z_0$ over $\mathbb{R}^N$:
\begin{enumerate}
  \item Initialize $z:=z_0$,
  \item While $||\nabla f(z)||_2>\nu$, $z:=z-\rho \nabla f(z)$.
\end{enumerate}
where the step size $\rho$ is the parameter of interest. For example, larger values of $\rho$ can make bigger progress per step but also carry the risk of overshooting a minimum of $f$.

We make the following assumptions on the gradient descent algorithm for unconstrained optimization problems \citep{g:16}.The reasoning and further elaboration regarding Assumption 1.4, along with Assumption 2.4 discussed in the next section, are detailed in the appendix.

\begin{assumption}
\begin{enumerate}
  \item[1)] $f$ is convex and $L$-smooth, i.e., for any $z_1, z_2 \in \mathbb{R}^N$, $||\nabla f(z_1)-\nabla f(z_2)|| \leq L||z_1-z_2||$,
  \item[2)] The initial points are bounded with $\nu <||z_0||\leq Z$,
  \item[3)] The step size $\rho$ is drawn from some interval: $\rho \in [\rho_l, \rho_u] \subset (0,\infty)$, 
  \item[4)] There exists a constant $\beta \in (0,1)$ such that $||z-\rho \nabla f(z)|| \leq (1-\beta)||z||$ for all $\rho \in [\rho_l, \rho_u]$.
\end{enumerate}
\end{assumption}

We define that $g(z,\rho):=z-\rho \nabla f(z)$ as the one-step iteration starting at $z$ with step size $\rho$. $g^j(z,\rho)$ represents the result after $j$ iterations. Based on the above assumptions, we have
\begin{displaymath}
||g^j(z,\rho)|| \leq (1-\beta)^j ||z||
\end{displaymath}
and
\begin{displaymath}
||\nabla f(g^j(z,\rho))|| \leq L(1-\beta)^j ||z||.
\end{displaymath}

We first review some {theoretical results derived by 
\citet{g:16},}
where a cost measure $c(A_\rho,x)$ is defined as the number of iterations required by the algorithm for the instance $x$.
Since $||z_0|| \leq Z$ and the algorithm stops once the gradient is less than $\nu$, it follows that $c(A_\rho,x)\leq \log(\nu/LZ)/\log(1-\beta)$ for all $\rho$ and $x$. Let $H = \log(\nu/LZ)/\log(1-\beta)$.  
We then estimate the error bound of the cost function with respect to different step sizes of a gradient decent algorithm: 
\begin{theorem}[Error estimation of cost function {\citep{g:16}}]
For any instance $x$, step sizes $\rho\in [\rho_l, \rho_u]$ and $\eta\in [\rho_l, \rho_u]$ with $0 \leq \eta - \rho \leq \frac{\nu \beta^2}{LZ}D(\rho)^{-H}$, where $D(\rho):=\max\{1,L\rho-1\}$, we have 
$|c(A_\rho,x)-c(A_\eta,x)|\leq 1$.
\end{theorem}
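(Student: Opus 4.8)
The plan is to control how far apart the two gradient-descent trajectories can drift when they are run from the common start $z_0$ with the nearby step sizes $\rho$ and $\eta$, and then to convert that drift bound into a statement about when each trajectory first satisfies the stopping test $\|\nabla f(\cdot)\|\le\nu$. Write $e_j:=\|g^j(z_0,\eta)-g^j(z_0,\rho)\|$ for the drift after $j$ common iterations, so that $e_0=0$. The whole argument rests on two single-step estimates that I would establish first: (i) a contraction/expansion estimate for one step at a \emph{fixed} step size, $\|g(a,\rho)-g(b,\rho)\|\le D(\rho)\|a-b\|$, which holds because the Jacobian $I-\rho\nabla^2 f$ has eigenvalues in $[1-\rho L,1]$ and hence spectral norm at most $\max\{1,|1-\rho L|\}=D(\rho)$; and (ii) a perturbation estimate for changing only the step size at a single point, $\|g(z,\eta)-g(z,\rho)\|=(\eta-\rho)\|\nabla f(z)\|$.

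Inserting the intermediate point $g(g^j(z_0,\eta),\rho)$ and applying (i) and (ii) gives the one-step recursion
\begin{displaymath}
e_{j+1}\le D(\rho)\,e_j+(\eta-\rho)\,\|\nabla f(g^j(z_0,\eta))\|\le D(\rho)\,e_j+(\eta-\rho)\,L(1-\beta)^{j}Z,
\end{displaymath}
where the last step uses the already-derived geometric decay of the gradient together with the bound $\|z_0\|\le Z$ (and the fact that Assumption 1.4 holds for $\eta$ as well). I would then unroll this from $e_0=0$; since $D(\rho)\ge 1>1-\beta$, the resulting geometric sum is dominated by its largest term and collapses to a bound of the form $e_j\le(\eta-\rho)\,\tfrac{LZ}{\beta}\,D(\rho)^{j}$. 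Because no trajectory runs for more than $H$ iterations, I only need this for $j\le H$, and substituting the hypothesis $\eta-\rho\le\tfrac{\nu\beta^{2}}{LZ}D(\rho)^{-H}$ yields the uniform drift estimate $e_j\le\nu\beta$ for all $j\le H$. This is exactly where the factors $\beta^{2}$ and $D(\rho)^{-H}$ in the hypothesis are consumed: $D(\rho)^{-H}$ cancels the worst-case expansion over at most $H$ steps, and one power of $\beta$ pays for the geometric-sum denominator.

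The final step is to read off the integer conclusion $|c(A_\rho,x)-c(A_\eta,x)|\le 1$ from $e_j\le\nu\beta$. The idea is that the gradient norm along each trajectory contracts by (essentially) a factor $1-\beta$ per iteration, so a single iteration changes it by an amount of order $\nu\beta$ near the stopping level $\nu$; since the two trajectories are never farther apart than one iteration's worth of this decay, neither can cross the level $\nu$ more than one step before or after the other. I expect this conversion to be the main obstacle, for two reasons. First, closeness of the iterates does not by itself bound a \emph{difference of stopping times}: the argument genuinely needs the geometric — not merely monotone — decay of the gradient to cap the discrepancy at a single iteration. Second, passing from the iterate drift $e_j$ to the gradient values costs a factor of $L$ through $L$-smoothness, so the delicate bookkeeping is to verify that the level $\nu\beta$, the per-step decay factor $1-\beta$, and this $L$ line up so that the crossing times can differ by at most one; carrying those constants carefully is precisely what the remainder of the proof must do.
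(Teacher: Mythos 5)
First, a point of orientation: the paper does not actually prove this theorem --- it is imported from \citet{g:16} --- so your attempt can only be measured against that source's route and against the paper's Lemma 1. The first half of your argument is sound and coincides with that route: your single-step estimates (i) and (ii), the recursion $e_{j+1}\le D(\rho)\,e_j+(\eta-\rho)L(1-\beta)^jZ$, and the unrolled bound $e_j\le(\eta-\rho)\frac{LZ}{\beta}D(\rho)^j$ reproduce exactly Lemma~1 (the Gupta--Roughgarden trajectory lemma), and substituting the hypothesis correctly yields the uniform drift $e_j\le\nu\beta$ for all $j\le H$. (A cosmetic caveat: your Jacobian justification of (i) presumes twice differentiability, which is not assumed; under convexity and $L$-smoothness the inequality $||g(a,\rho)-g(b,\rho)||\le D(\rho)||a-b||$ is Lemma 3.16 of \citet{g:16}, provable via cocoercivity of the gradient.)

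The proof is nevertheless incomplete, and the missing step is the crux, as you yourself half-concede in the last paragraph. The conversion from $e_j\le\nu\beta$ to $|c(A_\rho,x)-c(A_\eta,x)|\le 1$ rests on the premise that ``the gradient norm along each trajectory contracts by (essentially) a factor $1-\beta$ per iteration,'' but Assumption 1.4 contracts only the iterate norm $||z_j||$; it implies the decaying \emph{envelope} $||\nabla f(z_j)||\le L(1-\beta)^j||z_0||$, while $||\nabla f(z_j)||$ itself may be non-monotone beneath that envelope (no strong convexity or PL-type lower bound is assumed). The cost is a first-crossing time of this possibly non-monotone sequence, and a uniform perturbation bound alone cannot cap a difference of first-crossing times at one: the $\eta$-trajectory's gradient may dip just below $\nu$ at an early step $j_0$ while the $\rho$-trajectory's gradient, which you can only control up to $||\nabla f(g^{j_0}(z_0,\rho))||\le\nu+L\nu\beta$, hovers above $\nu$ for up to order $H-j_0$ further steps before the envelope forces it down --- and the stray factor of $L$ you notice is never reconciled (nothing in the assumptions makes $L\nu\beta$ small relative to $\nu$). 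What closes the gap is precisely an ingredient your sketch lacks: either a lower bound tying $||\nabla f(z)||$ back to $||z||$, or running the termination comparison through the contracted quantity $||z_j||$ itself, where the drift $\nu\beta$ combines with Assumption 1.4 as
\begin{displaymath}
(1-\beta)\bigl(\nu+\nu\beta\bigr)=(1-\beta^2)\nu<\nu,
\end{displaymath}
so that one extra iteration suffices --- this is exactly the calibration the factor $\beta^2$ in the hypothesis is designed for. As written, your final paragraph announces this bookkeeping as something ``the remainder of the proof must do'' rather than doing it, so the statement is not established.
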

The following lemma provides an estimate of the error bound for iteration processes using different step sizes, which will be used
in the proofs in the next section.
\begin{lemma}[Error estimation of iteration processes {\citep{g:16}}]
For any $z \in \mathbb{R}^N$, $j>0$, and step sizes $\rho \leq \eta$,
\begin{displaymath}
    ||g^j(z,\rho)-g^j(z,\eta)|| \leq (\eta-\rho)\frac{D(\rho)^j LZ}{\beta}.
\end{displaymath} 
\end{lemma}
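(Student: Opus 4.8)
The plan is to prove the bound by induction on $j$, controlling the per-step discrepancy between the two iteration maps. Write $T_\rho(w) := w - \rho\nabla f(w)$, so that $g^{j+1}(z,\rho) = T_\rho(g^j(z,\rho))$, and set $e_j := \|g^j(z,\rho) - g^j(z,\eta)\|$ with $e_0 = 0$. Inserting and subtracting the hybrid term $T_\rho(g^j(z,\eta))$ and applying the triangle inequality gives
\begin{displaymath}
e_{j+1} \le \big\|T_\rho(g^j(z,\rho)) - T_\rho(g^j(z,\eta))\big\| + \big\|T_\rho(g^j(z,\eta)) - T_\eta(g^j(z,\eta))\big\|.
\end{displaymath}
The second term is the easy one: at a common point the two maps differ only in the step, so it equals $(\eta-\rho)\|\nabla f(g^j(z,\eta))\|$, which by the gradient-contraction estimate recorded just before the lemma is at most $(\eta-\rho)L(1-\beta)^j\|z\| \le (\eta-\rho)L(1-\beta)^jZ$ (invoking the initial-point bound $\|z\|\le Z$).

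The crux is the first term, and this is the step I expect to be the main obstacle: I must show that the gradient-step map $T_\rho$ is Lipschitz with constant exactly $D(\rho)=\max\{1,L\rho-1\}$, i.e. $\|T_\rho(a)-T_\rho(b)\| \le D(\rho)\|a-b\|$. This is where convexity is indispensable. Expanding $\|T_\rho(a)-T_\rho(b)\|^2$ and invoking the cocoercivity (Baillon--Haddad) of the gradient of a convex $L$-smooth function, $\langle a-b,\,\nabla f(a)-\nabla f(b)\rangle \ge \tfrac1L\|\nabla f(a)-\nabla f(b)\|^2$, yields
\begin{displaymath}
\|T_\rho(a)-T_\rho(b)\|^2 \le \|a-b\|^2 + \rho\Big(\rho - \tfrac2L\Big)\|\nabla f(a)-\nabla f(b)\|^2 .
\end{displaymath}
When $\rho \le 2/L$ the correction term is nonpositive, so $T_\rho$ is nonexpansive and $D(\rho)=1$; when $\rho > 2/L$, bounding $\|\nabla f(a)-\nabla f(b)\|^2 \le L^2\|a-b\|^2$ collapses the right-hand side to $(L\rho-1)^2\|a-b\|^2$, matching $D(\rho)=L\rho-1$. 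I note that the cruder estimate $\|T_\rho(a)-T_\rho(b)\| \le (1+L\rho)\|a-b\|$ follows immediately from $L$-smoothness alone, but it would not reproduce the sharp constant $D(\rho)$ appearing in the statement, so the cocoercivity route is the one I would commit to.

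Combining the two pieces turns the recursion into $e_{j+1} \le D(\rho)\,e_j + (\eta-\rho)L(1-\beta)^jZ$. Unrolling from $e_0=0$ gives
\begin{displaymath}
e_j \le (\eta-\rho)LZ\sum_{i=0}^{j-1} D(\rho)^{\,j-1-i}(1-\beta)^i ,
\end{displaymath}
and it remains only to estimate the sum, which is routine bookkeeping: since $D(\rho)\ge1$ we have $D(\rho)^{\,j-1-i}\le D(\rho)^{\,j-1}$, so the sum is at most $D(\rho)^{\,j-1}\sum_{i=0}^{j-1}(1-\beta)^i \le D(\rho)^{\,j-1}/\beta \le D(\rho)^{\,j}/\beta$, using the geometric bound $\sum_{i\ge0}(1-\beta)^i = 1/\beta$ and $D(\rho)\ge1$ once more. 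Substituting back yields $e_j \le (\eta-\rho)D(\rho)^{\,j}LZ/\beta$, which is exactly the claim. I would flag that a naive induction attempting to carry the final bound $e_j \le (\eta-\rho)D(\rho)^{\,j}LZ/\beta$ directly does not close, because the residual $(1-\beta)^j$ term is left unabsorbed; this is precisely why one retains the explicit geometric sum and only estimates it at the very end.
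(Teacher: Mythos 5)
Your proof is correct: the cocoercivity (Baillon--Haddad) argument gives exactly the one-step Lipschitz constant $D(\rho)=\max\{1,L\rho-1\}$ for $T_\rho$ --- this is precisely Lemma 3.16 of \citet{g:16}, which this paper itself invokes in its appendix proofs --- and the recursion $e_{j+1}\le D(\rho)\,e_j+(\eta-\rho)LZ(1-\beta)^j$ together with your geometric-sum estimate $\sum_{i=0}^{j-1}D(\rho)^{j-1-i}(1-\beta)^i\le D(\rho)^{j}/\beta$ yields the stated bound. The paper provides no proof of this lemma, importing it wholesale from \citet{g:16}; your argument reconstructs that cited proof (one-step Lipschitz bound plus unrolled perturbation recursion), so it is essentially the same approach, with your flag that $\|z\|\le Z$ is implicitly assumed being a fair reading of the statement.
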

Moreover, let $K=\frac{\nu \beta^2}{LZ}D(\rho_u)^{-H}$. We choose $N$ as a minimal $K$-net, consisting of all integer multiples of $K$ that lie in $[\rho_l,\rho_u]$. Then $|N| \leq \rho_u/K+1$.    
It is known that the pseudo-dimension of $\mathcal{A}_N=\{\mathit{A}_\rho, \rho \in N\}$ is at most $\log|N|$. Since $\mathcal{A}_N$ is finite and $\log|N|\sim \tilde{O}(H)$, it also 
admits an ERM algorithm $L_N$, which $(\epsilon,\delta)$-learns the optimal algorithm in $\mathcal{A}_N$ using $m=\tilde{O}(H^2\log|N|/\epsilon^2)$ samples, where $\tilde{O}(\cdot)$ suppresses logarithmic factors in $Z/\nu$, $\beta$, $L$ and $\rho_u$.
The following theorem presents the result.

\begin{theorem}[Generalization Guarantees for GD on number of iterations  {\citep{g:16}}]
There is a learning algorithm that $(1+\epsilon, \delta)$-learns the optimal algorithm in $\mathcal{A}$ using $m=\tilde{O}(H^3/\epsilon^2)$ samples from $\mathcal{D}$.
\end{theorem}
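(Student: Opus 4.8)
The plan is to reduce the learning problem over the continuous family $\mathcal{A}=\{A_\rho:\rho\in[\rho_l,\rho_u]\}$ to the finite net family $\mathcal{A}_N$ already introduced, and then transport the ERM guarantee on $\mathcal{A}_N$ back to $\mathcal{A}$ at the cost of a single additive unit. The preceding discussion has already supplied the two ingredients needed on the discrete side: the pseudo-dimension of $\mathcal{A}_N$ is at most $\log|N|=\tilde{O}(H)$, and consequently there is an ERM algorithm $L_N$ that $(\epsilon,\delta)$-learns the optimal algorithm within $\mathcal{A}_N$ from $m=\tilde{O}(H^2\log|N|/\epsilon^2)$ samples. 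So the remaining task is purely the continuous-to-discrete comparison.

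First I would establish that the net is cost-faithful for every step size. By construction $N$ consists of the integer multiples of $K=\frac{\nu\beta^2}{LZ}D(\rho_u)^{-H}$ lying in $[\rho_l,\rho_u]$, so any $\rho\in[\rho_l,\rho_u]$ admits a net point $\rho'\in N$ with $0\le |\rho-\rho'|\le K$. Since $D(\rho)\le D(\rho_u)$, the gap $K$ satisfies the hypothesis $0\le\eta-\rho\le\frac{\nu\beta^2}{LZ}D(\rho)^{-H}$ of Theorem 2, whence $|c(A_\rho,x)-c(A_{\rho'},x)|\le 1$ for every instance $x$. Applying this to an optimal continuous step size $\rho^\ast$ and taking expectations over $x\sim\mathcal{D}$ yields $\mathbf{E}_{x\sim\mathcal{D}}[c(A_{\rho'},x)]\le\mathbf{E}_{x\sim\mathcal{D}}[c(A_{\rho^\ast},x)]+1$; that is, the optimum over the finite net exceeds the optimum over the full family by at most $1$.

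Chaining the two estimates finishes the argument: running $L_N$ returns, with probability at least $1-\delta$, an algorithm $\hat A\in\mathcal{A}_N\subseteq\mathcal{A}$ whose expected cost is within $\epsilon$ of the net optimum, and the net optimum is within $1$ of the optimum over $\mathcal{A}$, so $\hat A$ is within $1+\epsilon$ of the best algorithm in $\mathcal{A}$, which is exactly $(1+\epsilon,\delta)$-learning. For the sample size, substituting $\log|N|=\tilde{O}(H)$ (which follows from $|N|\le\rho_u/K+1$ and the definition of $K$, since $1/K$ scales as $D(\rho_u)^{H}$ and thus contributes the single extra factor of $H$) into $m=\tilde{O}(H^2\log|N|/\epsilon^2)$ gives $m=\tilde{O}(H^3/\epsilon^2)$.

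I expect the main obstacle to be the calibration of the net spacing $K$: it is tuned to the worst case $D(\rho_u)$ so that a single uniform grid simultaneously triggers Theorem 2 for every $\rho$, while the factor $D(\rho_u)^{-H}$ enlarges $|N|$ only enough that $\log|N|$ picks up a term $H\log D(\rho_u)=\tilde{O}(H)$ --- the single extra factor of $H$ in the sample bound --- rather than inflating it beyond $\tilde{O}(H^3/\epsilon^2)$. The additive $1$, as opposed to an arbitrarily small constant, is forced by the integrality of the iteration-count cost and is exactly the deficiency that the paper's new cost function is designed to overcome.
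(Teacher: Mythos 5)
Your proposal is correct and follows essentially the same route as the paper, which (following \citet{g:16}) builds the minimal $K$-net with $K=\frac{\nu\beta^2}{LZ}D(\rho_u)^{-H}$, bounds the pseudo-dimension of $\mathcal{A}_N$ by $\log|N|=\tilde{O}(H)$, runs ERM on the net via Theorem 1 and Corollary 1, and uses Theorem 2 to pay the additive $1$ when passing from the net optimum to the continuous optimum. Your observation that $D(\rho)\le D(\rho_u)$ makes the single uniform spacing $K$ trigger Theorem 2 for every $\rho$, and that $1/K\sim D(\rho_u)^{H}$ contributes exactly the extra factor of $H$ in $m=\tilde{O}(H^3/\epsilon^2)$, is precisely the accounting implicit in the paper's argument.
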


\subsection{A new cost function}
Theorem 2 and Theorem 3 use the number of iterations in gradient descent as the cost function. However, it is challenging to extend this approach to more complex methods like CG. Moreover, when the optimization problem is too large or computation time is constrained, determining the number of iterations becomes impractical. Additionally, since the number of iterations must be an integer, the learning error $1+\epsilon$ cannot be reduced.

Considering the above facts, we introduce an alternative cost measure. In mixed integer linear programming (MILP), the primal-dual integral was first proposed in \citep{b:13}, aiming to determine 
the quality of best feasible solutions 
and 
the time when they are found during the solving process.
Specifically, for an MILP instance minimizing $\mathbf{c}^{\top}\mathbf{x}$ subject to $
\mathbf{A}^{\top}\mathbf{x} \leq \mathbf{b}$ and $\mathbf{x} \in \mathbb{Z}^p \times \mathbb{R}^{n-p}$,
the primal integral and dual integral are respectively defined as follows \citep{b:13}:
\begin{displaymath}
\begin{aligned}
P(T)&:=\int_{t=0}^{T} \mathbf{c}^{\top}\mathbf{x}_t^{*}dt - T\mathbf{c}^{\top}\mathbf{x}^{*},\\
D(T)&:=T\mathbf{c}^{\top}\mathbf{x}^{*}-\int_{t=0}^{T} \mathbf{z}_t^{*}dt,
\end{aligned}
\end{displaymath}
where $\mathbf{x}_t^{*}$ is the best feasible solution found at time $t$, $\mathbf{z}_t^{*}$ is the best dual bound at time $t$, and $T\mathbf{c}^{\top}\mathbf{x}^{*}$ is an instance-specific constant that depends on the optimal solution $x^{*}$.
Theoretically, both the primal integral and the dual integral are to be minimized, and takes an optimal value of 0, and a small dual integral will lead to both good and fast decisions \citep{g:22}. Using primal and dual integrals to measure the performance of MILPs has been widely used recent years \citep{g:22,q:22,z:23}.
Inspired by the concept of primal-dual integral in MILP, we define
\begin{displaymath}
c(\mathit{A}_\rho,x)=\sum_{j=1}^{M}||z^{*}-g^j(z_0,\rho)||,
\end{displaymath}
where $M$ is the number of iterations. Since gradient descent is translation invariant and $f$ is convex, we introduce the following conventions for iteration augmentation for convenience:
\begin{enumerate}
  \item $z^*=0$ and $f(z^*)=0$,
  \item For $j>M$, $g^j(z_0,\rho) = z^{*}$ and correspondingly, $||z^*-g^j(z_0,\rho)||=0$.
\end{enumerate}
Then $c(A_\rho ,x) \leq M||z_0|| \leq Z\log(\nu/LZ)/\log(1-\beta)$ for all $\rho$ and $x$, meaning that the cost measure is also bounded. 
Suppose at each iteration, the step size $\rho$ chosen in $N$ adheres to Assumption 1, then $\mathcal{A}_N =\{A_\rho: \rho \in N\}$ is also finite, and therefore its pseudo-dimension is at most $[\log(\nu/LZ)/\log(1-\beta)]\log|N|$.

\subsection{Learning complexity of gradient descent algorithm under the new cost function}

It is evident that, similar to the primal-dual integral, the new cost function offers several advantages. It can be calculated even if the iteration is halted before reaching the optimal value (for instance, when computation time exceeds a specified tolerance); a lower cost function generally signifies an algorithm that is both efficient and fast; among algorithms with the same number of iterations, the new cost function can discern the most effective one, theoretically corresponding to the algorithm with the highest convergence speed.

Drawing on the new cost function, we are able to establish new learning complexity results for the gradient descent algorithm.
Consider the error $|c(A_\rho ,x)-c(A_\eta ,x)|$, we have the following conclusion:
\begin{theorem}[Error estimation of new cost function]
For any constant $C > 0$, 
instance $x$, and 
step sizes $\rho\in [\rho_l, \rho_u]$, $\eta\in [\rho_l, \rho_u]$ with $0 \leq \eta - \rho \leq \frac{\beta}{LZ}\frac{1-D(\rho)}{1-D(\rho)^H}D(\rho)^{-1}C$, we have
\begin{displaymath}
  |c(A_\rho,x)-c(A_\eta,x)|\leq C.  
\end{displaymath}
\end{theorem}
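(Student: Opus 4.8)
The plan is to exploit the convention $z^{*}=0$, under which the new cost function collapses to $c(A_\rho,x)=\sum_{j\geq 1}\|g^j(z_0,\rho)\|$, a sum of iterate norms in which every term with $j>M$ vanishes. First I would pass to the difference of the two costs and apply the triangle inequality twice: once in the reverse form $\big|\,\|u\|-\|v\|\,\big|\leq\|u-v\|$ to control each summand, and once across the sum. Since the stopping criterion together with Assumption 1 guarantees that no instance runs for more than $H$ iterations, I may truncate both sums at $H$ and write
\begin{displaymath}
|c(A_\rho,x)-c(A_\eta,x)| \leq \sum_{j=1}^{H}\big\|g^j(z_0,\rho)-g^j(z_0,\eta)\big\|.
\end{displaymath}

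The second step is to insert Lemma 1 termwise. Because $\rho\leq\eta$, Lemma 1 yields $\|g^j(z_0,\rho)-g^j(z_0,\eta)\|\leq(\eta-\rho)D(\rho)^j LZ/\beta$, so that
\begin{displaymath}
|c(A_\rho,x)-c(A_\eta,x)| \leq (\eta-\rho)\frac{LZ}{\beta}\sum_{j=1}^{H}D(\rho)^j.
\end{displaymath}
Since $D(\rho)=\max\{1,L\rho-1\}\geq 1$, the finite geometric sum evaluates to $\sum_{j=1}^{H}D(\rho)^j=D(\rho)\frac{1-D(\rho)^H}{1-D(\rho)}$ (with the value $H$ understood in the degenerate case $D(\rho)=1$). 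Substituting the hypothesis $\eta-\rho\leq\frac{\beta}{LZ}\frac{1-D(\rho)}{1-D(\rho)^H}D(\rho)^{-1}C$ cancels this factor exactly and leaves the upper bound $C$, which is the desired claim. The geometric summation is the only genuine computation, and it matches the factor appearing in the theorem's hypothesis precisely.

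The main obstacle I anticipate is reconciling the termwise use of Lemma 1 with the convention that sets $g^j(z_0,\rho)=z^{*}$ once the algorithm with step size $\rho$ has halted: the two step sizes $\rho$ and $\eta$ need not terminate after the same number of iterations, so for indices lying between the two stopping times one summand is forced to zero by the convention while the other is a genuine iterate, and Lemma 1 is stated for the true iteration map rather than the truncated one. I would resolve this by treating the iteration map as formally continued past the stopping time, so that Lemma 1 applies uniformly for all $j\leq H$, and then observing that the convention only replaces these tail iterates by $z^{*}=0$; by the decay estimate $\|g^j(z_0,\rho)\|\leq(1-\beta)^j\|z_0\|$ from Assumption 1.4, these leftover contributions remain within the same geometric envelope controlled by $D(\rho)^j$, so truncation at $H$ does not inflate the bound. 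Verifying the degenerate case $D(\rho)=1$ separately then completes the argument.
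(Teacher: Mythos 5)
Your proof follows essentially the same route as the paper's: using the convention $z^{*}=0$, apply the triangle inequality termwise to the cost difference, invoke Lemma 1 to bound each summand by $(\eta-\rho)D(\rho)^{j}LZ/\beta$, and evaluate the geometric sum $\sum_{j=1}^{H}D(\rho)^{j}=D(\rho)\frac{1-D(\rho)^{H}}{1-D(\rho)}$ so that the hypothesis on $\eta-\rho$ cancels it exactly, which is precisely the computation in the paper's appendix proof of Theorem 4 (the paper sums to $M$ and then uses $M\leq H$). The stopping-time mismatch you flag is genuine, but the paper's own proof silently glosses over it in exactly the same way (it applies Lemma 1 up to $M=\max\{M_\rho,M_\eta\}$ without justifying the terms where one algorithm has already halted), so your explicit attention to it goes beyond the published argument --- though be aware that your proposed patch via $\|g^{j}(z_0,\rho)\|\leq(1-\beta)^{j}Z$ yields a leftover term that is \emph{not} proportional to $\eta-\rho$ and hence does not by itself restore the exact cancellation; a fully rigorous repair would need more (e.g., continuity of the stopping time in the step size), a gap shared by the paper.
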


Let $K=\frac{\beta}{LZ}\frac{1-D(\rho_u)}{1-D(\rho_u)^H}D(\rho_u)^{-1}C$. We choose $N$ as a minimal $K-$net, consisting of all integer multiples of $K$ within $[\rho_l,\rho_u]$. Then $|N| \leq \rho_u/K+1$. Consequently, by uniform convergence theorem and Corollary 1, an ERM algorithm can $(C+\epsilon, \delta)$-learn the optimal step size with sufficient number of samples. 
The following theorem demonstrates this result.

\begin{theorem}[Generalization guarantees for GD on new cost function]
For any constant $C > 0$, there exists a learning algorithm that $(C+\epsilon, \delta)$-learns the optimal algorithm in $\mathcal{A}$ using $m=\tilde{O}(H^3/\epsilon^2)$ samples from $\mathcal{D}$.
\end{theorem}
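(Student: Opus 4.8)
The plan is to follow exactly the discretization-and-reduction strategy already used for Theorem 3, but now feeding in the sharper per-instance error bound of Theorem 4. The point is that although $\mathcal{A}=\{A_\rho:\rho\in[\rho_l,\rho_u]\}$ is an infinite, continuously parameterized family, replacing it by the finite net $\mathcal{A}_N=\{A_\rho:\rho\in N\}$ costs us only an additive $C$ in expected cost; once we are on a finite class, the uniform convergence theorem (Theorem 1) and the ERM guarantee (Corollary 1) apply directly, and only the cardinality $|N|$ controls the sample count.

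First I would control the \emph{discretization error}. By construction $N$ is a $K$-net with $K=\frac{\beta}{LZ}\frac{1-D(\rho_u)}{1-D(\rho_u)^{H}}D(\rho_u)^{-1}C$, so every $\rho\in[\rho_l,\rho_u]$ has a neighbor $\rho'\in N$ with $0\le|\rho-\rho'|\le K$. The eligibility threshold in Theorem 4, namely $\tfrac{\beta}{LZ}\tfrac{1-D(\rho)}{1-D(\rho)^{H}}D(\rho)^{-1}C$, is decreasing in $\rho$ (through $D(\rho)=\max\{1,L\rho-1\}$), so the endpoint value $K$ taken at $\rho_u$ is the most restrictive choice; hence the gap $|\rho-\rho'|\le K$ satisfies the hypothesis of Theorem 4 for the relevant step size, giving $|c(A_\rho,x)-c(A_{\rho'},x)|\le C$ for every instance $x$. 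Taking expectations over $x\sim\mathcal{D}$ and applying this to a (near-)minimizer of $\mathbf{E}_x[c(A_\rho,x)]$ over $\mathcal{A}$ yields
\begin{displaymath}
\min_{A\in\mathcal{A}_N}\mathbf{E}_{x\sim\mathcal{D}}[c(A,x)]\le\inf_{A\in\mathcal{A}}\mathbf{E}_{x\sim\mathcal{D}}[c(A,x)]+C,
\end{displaymath}
so the best net algorithm is within $C$ of the continuous optimum.

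Next I would bound the net size and the pseudo-dimension. From $|N|\le\rho_u/K+1$ and the explicit form of $K$, the dominant factor is $D(\rho_u)^{H}$, so $\log|N|=\tilde O(H)$; in the boundary regime $D(\rho_u)=1$ the ratio $\frac{1-D}{1-D^{H}}$ tends to $1/H$ and $\log|N|$ is only smaller, so $\tilde O(H)$ holds in all cases. Since $\mathcal{A}_N$ is finite, its pseudo-dimension is at most $\log|N|=\tilde O(H)$. I would then invoke Corollary 1 (equivalently Theorem 1): any ERM algorithm $(2\epsilon,\delta)$-learns the optimum of $\mathcal{A}_N$ using $m=\tilde O\big((H/\epsilon)^{2}\log|N|\big)=\tilde O(H^{3}/\epsilon^{2})$ samples. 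Combining the in-class $2\epsilon$ guarantee with the additive $C$ from discretization, the ERM output $\hat A$ satisfies $\mathbf{E}_x[c(\hat A,x)]\le\inf_{\mathcal{A}}\mathbf{E}_x[c]+C+2\epsilon$ with probability at least $1-\delta$; rescaling $\epsilon$ yields the claimed $(C+\epsilon,\delta)$-guarantee with the same order $m=\tilde O(H^{3}/\epsilon^{2})$.

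The main obstacle is the clean \emph{composition of the two error sources}: the discretization gap is a deterministic, uniform-over-$x$ bound coming from Theorem 4, whereas the $2\epsilon$ term is a high-probability, uniform-over-$\mathcal{A}_N$ sampling bound, and I must check that these add rather than interact, and in particular that minimizing empirical cost over $\mathcal{A}_N$ competes against the \emph{continuous} optimum $\inf_{\mathcal{A}}$ and not merely $\min_{\mathcal{A}_N}$. A secondary technical point is verifying $\log|N|=\tilde O(H)$ uniformly across the regimes $D(\rho_u)>1$ and $D(\rho_u)=1$, so that the $\tilde O(\cdot)$ absorbs only logarithmic factors in $Z/\nu$, $\beta$, $L$, and $\rho_u$, and the final count is genuinely $\tilde O(H^{3}/\epsilon^{2})$ with no hidden dependence on the free constant $C$.
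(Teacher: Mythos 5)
Your proposal is correct and follows essentially the same route as the paper: discretize $[\rho_l,\rho_u]$ by the minimal $K$-net with $K=\frac{\beta}{LZ}\frac{1-D(\rho_u)}{1-D(\rho_u)^{H}}D(\rho_u)^{-1}C$ from Theorem 4, bound $\log|N|=\tilde O(H)$, and invoke the finite-class pseudo-dimension bound together with Theorem 1 and Corollary 1 to get an ERM algorithm with $m=\tilde O(H^{2}\log|N|/\epsilon^{2})=\tilde O(H^{3}/\epsilon^{2})$ samples. If anything, your write-up is more careful than the paper's: you make explicit the monotonicity of the Theorem 4 threshold in $\rho$, the boundary regime $D(\rho_u)=1$, and the composition of the deterministic additive-$C$ discretization error with the high-probability $2\epsilon$ sampling error against the continuous optimum over $\mathcal{A}$, all of which the paper leaves implicit.
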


Compared with the learning complexity results based on number of iterations \citep{g:16}, we improved the learning complexity from  $(1+\epsilon, \delta)$-learns the optimal algorithm to $(C+\epsilon, \delta)$-learns the optimal algorithm in $\mathcal{A}$ using $m=\tilde{O}(H^3/\epsilon^2)$ samples.



\section{Learning complexity of conjugate gradient algorithm}

Based on the new cost function, we are able to investigate the learning complexity of the conjugate gradient algorithm focusing on both the step size and the conjugate parameter.
Recall the basic conjugate gradient algorithm for minimizing a function $f$ given an initial point $z_0$ over $\mathbb{R}^N$:
\begin{enumerate}
  \item Initialize $z:=z_0$,
  \item For $n \geq 1$, while $||\nabla f(z_n)||_2>\nu$, $z_1:=z_{0}-\rho \nabla f(z_0)$, $z_{n+1}:=z_n - \rho \nabla f(z_n)-\eta (z_n-z_{n-1})$.
\end{enumerate}
where $\rho$ and $\eta$ denote the step size and the conjugate parameter (we simply call both of them the step sizes). 
We denote the $j-$step iteration from $z_0$ and $z_1$ with step sizes $\rho$ and $\eta$ as $g^j(z_1,z_0,\rho,\eta)=z_j - \rho \nabla f(z_j)-\eta (z_j-z_{j-1})$. The $j$-th step starting from  $z_0$ and $z_1$ can be viewed as the first step starting from  $z_j$ and $z_{j-1}$, thus $g^j(z_1,z_0,\rho,\eta)=g^1(z_j,z_{j-1},\rho,\eta)$. For convenience, we define that $g(z_j,z_{j-1},\rho,\eta):=g^1(z_j,z_{j-1},\rho,\eta)$ and $g^0(z_1,z_0,\rho,\eta)=z_1$.

We make the following assumptions about the conjugate gradient algorithm for unconstrained optimization problems.
\begin{assumption}

\begin{enumerate}
  \item[1)] $f$ is convex and $L$-smooth, i.e. for any $z_1$ and $z_2$, $||\nabla f(z_1)-\nabla f(z_2)|| \leq L||z_1-z_2||$,
  \item[2)] The initial points are bounded with $\nu <||z_0||\leq Z$,
  \item[3)] The step sizes $\rho$ and $\eta$ are drawn from some interval $\rho \in [\rho_l, \rho_u] \subset (0,\infty)$ and $\eta\in [\eta_l, \eta_u] \subset (0,\infty)$ respectively,
  \item[4)] There exists a constant $\beta \in (0,1)$ such that for any $z_n$ and $z_{n-1}$, $||z_n - \rho \nabla f(z_n)-\eta (z_n-z_{n-1})|| \leq (1-\beta)||z_n||$ for all $\rho \in [\rho_l, \rho_u]$ and  $\eta\in [\eta_l, \eta_u] \subset (0,\infty)$ respectively.
\end{enumerate}
\end{assumption}
If the condition 4) in Assumption 2 holds, then $||z_{n+1}|| \leq (1-\beta)||z_n||$. Given the arbitrariness of $n$, it follows that  $||z_{n}|| \leq (1-\beta)||z_{n-1}|| $. By induction, this implies $||z_{n}|| \leq (1-\beta)^n||z_0||$, and consequently, $||g^j(z_i,z_{i-1},\rho,\eta)|| \leq (1-\beta)^j||z_i||$. Then by condition 1) in Assumption 2, we have $||\nabla g^j(z_1,z_{0},\rho,\eta)|| \leq L(1-\beta)^j||z_0||$. According to the termination condition $||\nabla f(z_n)||_2\leq \nu$, the number of iterations is less than $H:=\log(LZ/\nu)/\log(1-\beta)$.

To establish the desired results, we first need to utilize some results related to second order homogeneous linear recurrence relations.
\begin{definition}[Second order homogeneous linear recurrence relation]
Any recurrence relation of the form
\begin{displaymath}
x_n = ax_{n-1}+bx_{n-2}
\end{displaymath}
is a second-order homogeneous linear recurrence relation.
\end{definition}
The general formula for any second-order homogeneous linear recurrence relation is given in the following lemma.
\begin{lemma}[General formula {\citep{s:10}}]
The general solution for $x_n = ax_{n-1}+bx_{n-2}$ is given by
\begin{displaymath}
x_n = c_1r_1^n+c_2r_2^n,
\end{displaymath}\small
where $r_1$ and $r_2$ are the roots of the characteristic equation
\begin{displaymath}
r^2-ar-b=0,
\end{displaymath}
$c_1$ and $c_2$ are 
constants by plugging in given $x_0$ and $x_1$.
\end{lemma}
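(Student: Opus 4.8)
The plan is to proceed by the standard characteristic-root method, establishing first that the proposed closed form solves the recurrence and then that its two constants can always be matched to the prescribed initial data. First I would verify that any pure geometric sequence $x_n = r^n$ satisfies the recurrence precisely when $r$ solves the characteristic equation: substituting $x_n = r^n$ into $x_n = a x_{n-1} + b x_{n-2}$ and dividing through by $r^{n-2}$ (the value $r=0$ is not a root unless $b$ vanishes, a degenerate case I would set aside) yields $r^2 - a r - b = 0$. Hence each root $r_1, r_2$ of the characteristic equation produces a solution $r_i^n$.

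Next, since the recurrence is linear and homogeneous, any combination $x_n = c_1 r_1^n + c_2 r_2^n$ is again a solution; this follows by distributing the recurrence across the two terms and using that each $r_i^n$ already satisfies it. It then remains to show that for arbitrary prescribed $x_0$ and $x_1$ one can choose $c_1, c_2$ reproducing them. This reduces to solving the linear system
\begin{displaymath}
\begin{aligned}
c_1 + c_2 &= x_0, \\
c_1 r_1 + c_2 r_2 &= x_1,
\end{aligned}
\end{displaymath}
whose coefficient determinant equals $r_2 - r_1$. When the roots are distinct this determinant is nonzero, so $c_1$ and $c_2$ exist and are uniquely determined.

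Finally I would close the argument with a uniqueness observation: a second-order recurrence is completely pinned down by its first two terms, since every later $x_n$ is forced by $x_{n-1}$ and $x_{n-2}$. Because the closed form $c_1 r_1^n + c_2 r_2^n$ agrees with the actual sequence at $n=0$ and $n=1$, a routine induction shows the two agree for all $n$, which is exactly the claimed general formula. The only genuine obstacle is the degenerate case $r_1 = r_2$, where the determinant $r_2 - r_1$ vanishes and the ansatz collapses to a single geometric term; there one must adjoin $n\, r_1^n$ as an independent second solution, yielding the modified form $(c_1 + c_2 n)\, r_1^n$. Since the lemma as stated, and its intended use in the conjugate gradient recurrence analysis, concerns the distinct-root regime, I would handle the repeated-root situation as a short remark rather than as the main thread of the proof.
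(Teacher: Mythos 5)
Your proof is correct, but note that the paper itself offers no proof of this lemma at all: it is imported as a known classical fact with the citation \citep{s:10}, and no argument for it appears in the appendix. Your write-up is the standard characteristic-root proof, and it is complete for the distinct-root case: geometric ansatz, closure under linear combination, solvability of the $2\times 2$ Vandermonde system with determinant $r_2-r_1$, and uniqueness by induction from the first two terms. Your care about the repeated-root degeneracy is sound but moot for this paper's use of the lemma: in every application here the recurrence is $A_n=[D(\rho)+\eta]A_{n-1}+\eta A_{n-2}$ with $D(\rho)\geq 1$ and $\eta>0$, so the characteristic polynomial $r^2-[D(\rho)+\eta]r-\eta$ has discriminant $[D(\rho)+\eta]^2+4\eta>0$ and hence two distinct real roots (indeed $r_1>0>r_2$, as the paper exploits via Vieta's formulas in the proofs of Lemmas 7 and 8), so relegating $r_1=r_2$ to a remark is exactly the right call.
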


We first demonstrate the 
Lipschitz continuity of $g$.
\begin{lemma}[Lipschitz continuity of $g$] For any $w_n$, $w_{n-1} \in \mathbb{R}^N$ in a CG iteration process (resp. $y_n$, $y_{n-1} \in \mathbb{R}^N$), and any step sizes $\rho \in [\rho_l, \rho_u]$, $\eta \in [\eta_l, \eta_u]$,
\begin{displaymath}
\begin{split}
&||g(w_n,w_{n-1},\rho,\eta)-g(y_n,y_{n-1},\rho,\eta)|| \\
\leq & [D(\rho)+\eta]||w_{n}-y_{n}||+\eta||w_{n-1}-y_{n-1}||,
\end{split}
\end{displaymath}
where $D(\rho):=max\{1,L\rho-1\}$.
\end{lemma}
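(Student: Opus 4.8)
The plan is to expand both applications of the one-step map using its definition $g(z_n,z_{n-1},\rho,\eta)=z_n-\rho\nabla f(z_n)-\eta(z_n-z_{n-1})$ and then estimate the resulting difference term by term. Writing $a:=w_n-y_n$, $b:=w_{n-1}-y_{n-1}$, and $p:=\nabla f(w_n)-\nabla f(y_n)$, a direct subtraction gives
\begin{displaymath}
g(w_n,w_{n-1},\rho,\eta)-g(y_n,y_{n-1},\rho,\eta)=(a-\rho p)-\eta(a-b).
\end{displaymath}
The key structural observation is that this separates into a pure gradient-descent increment $a-\rho p$, which depends only on the current iterates, and a conjugate (momentum) increment $-\eta(a-b)$, which is linear in the two differences. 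Applying the triangle inequality immediately yields
\begin{displaymath}
||g(w_n,w_{n-1},\rho,\eta)-g(y_n,y_{n-1},\rho,\eta)||\le ||a-\rho p||+\eta||a||+\eta||b||.
\end{displaymath}

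The technical heart of the argument is the bound $||a-\rho p||\le D(\rho)||a||$, i.e.\ that the plain gradient step $z\mapsto z-\rho\nabla f(z)$ is $D(\rho)$-Lipschitz on $\mathbb{R}^N$. I would establish this from the convexity and $L$-smoothness hypotheses of Assumption 2: by the co-coercivity (Baillon--Haddad) inequality, $\langle p,a\rangle\ge \tfrac{1}{L}||p||^2$, which is equivalent to saying that, for fixed $a$, the admissible gradient difference $p$ lies in the Euclidean ball centered at $\tfrac{L}{2}a$ of radius $\tfrac{L}{2}||a||$. Maximizing the convex quantity $||a-\rho p||=\rho\,||p-\tfrac{1}{\rho}a||$ over this ball reduces to a one-dimensional distance computation along the direction of $a$, and simplifying the resulting expression gives exactly $\max\{1,L\rho-1\}\,||a||=D(\rho)||a||$; the quadratic $f(z)=\tfrac{L}{2}||z||^2$ realizes equality, so the constant is sharp. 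This contraction estimate is precisely the single-step version underlying the factor $D(\rho)^j$ in Lemma 2, so it may alternatively be cited from the gradient-descent analysis rather than reproved here.

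Substituting this bound into the triangle-inequality estimate gives
\begin{displaymath}
||g(w_n,w_{n-1},\rho,\eta)-g(y_n,y_{n-1},\rho,\eta)||\le D(\rho)||a||+\eta||a||+\eta||b||=[D(\rho)+\eta]\,||w_n-y_n||+\eta\,||w_{n-1}-y_{n-1}||,
\end{displaymath}
which is the claimed inequality. The only genuine subtlety is the gradient-step contraction bound; once that is in hand, the conjugate parameter contributes only the extra linear terms $\eta||a||+\eta||b||$, handled trivially by the triangle inequality. Thus, at the level of this one-step estimate, the extension from gradient descent to conjugate gradient is essentially free, with the momentum simply coupling the current and previous difference norms — a coupling that will later force a two-term recurrence and motivate the use of Lemma~3 on second-order linear recurrences.
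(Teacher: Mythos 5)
Your proof is correct and takes essentially the same route as the paper: expand the one-step map, split off the gradient-descent increment $(w_n-y_n)-\rho(\nabla f(w_n)-\nabla f(y_n))$ from the momentum term, bound the former by $D(\rho)||w_n-y_n||$, and handle the latter by the triangle inequality as $\eta||w_n-y_n||+\eta||w_{n-1}-y_{n-1}||$. The only difference is that the paper simply cites the $D(\rho)$-Lipschitz bound for the plain gradient step from Lemma~3.16 of Gupta and Roughgarden (2016), whereas you reprove it from co-coercivity --- a correct self-contained derivation, and as you note, citing it would equally suffice.
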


We extend Lemma 3 to the case of $g^j$ and derive the following result.
\begin{corollary}[Lipschitz continuity of $g^j$]
For any $w_n$, $w_{n-1} \in \mathbb{R}^N$ in a CG iteration process started at $w_0$ (resp. $y_n$, $y_{n-1} \in \mathbb{R}^N$ started at $y_0$), and any step sizes $\rho \in [\rho_l, \rho_u]$, $\eta \in [\eta_l, \eta_u]$, 
\begin{displaymath}
\begin{split}
& ||g(w_n,w_{n-1},\rho,\eta)-g(y_n,y_{n-1},\rho,\eta)|| \\
= & ||g^n(w_1,w_{0},\rho,\eta)-g^n(y_1,y_{0},\rho,\eta)||\\
\leq &\left[\frac{D(\rho)-r_2}{r_1-r_2}r_1^n+\frac{D(\rho)-r_1}{r_2-r_1}r_2^n\right]||w_{0}-y_{0}||\\
:=&F(\rho,\eta,n)||w_{0}-y_{0}||,
\end{split}
\end{displaymath}
where $r_1$ and $r_2$ are the roots of characteristic equation of second order homogeneous linear recurrence relation $x_n = [D(\rho)+\eta]x_{n-1}+\eta x_{n-2}$.
\end{corollary}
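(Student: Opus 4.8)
The plan is to convert the vector claim into a scalar second-order recurrence on the error norms and then solve it with the general formula of Lemma~2. Let $\{w_j\}$ and $\{y_j\}$ be the two CG trajectories, both run with the same step sizes $\rho,\eta$ and started at $w_0$ and $y_0$, and set $e_j := \|w_j - y_j\|$. Since $g^{n}(w_1,w_0,\rho,\eta)=w_{n+1}$ by the definition of $g^{j}$, the left-hand side to be bounded is precisely an error norm $e_{\cdot}$, so it suffices to bound the whole sequence $\{e_j\}$ by a multiple of $e_0=\|w_0-y_0\|$.

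First I would fix the two base values that drive the recurrence. The quantity $e_0$ is given, while $w_1$ and $y_1$ differ by a single gradient-descent step $z\mapsto z-\rho\nabla f(z)$; convexity and $L$-smoothness make this map $D(\rho)$-Lipschitz (the same estimate already used inside Lemma~3), so $e_1 \le D(\rho)\,e_0$. For the inductive part, applying Lemma~3 to consecutive iterates of the two trajectories gives
\begin{displaymath}
e_{j+1} \le [D(\rho)+\eta]\,e_j + \eta\,e_{j-1}, \qquad j \ge 1.
\end{displaymath}

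The crux is to dominate $\{e_j\}$ by the solution of the corresponding equality recurrence. Define $a_0=e_0$, $a_1=D(\rho)e_0$ and $a_{j+1}=[D(\rho)+\eta]a_j+\eta a_{j-1}$. Because the coefficients $D(\rho)+\eta$ and $\eta$ are strictly positive and $e_0=a_0$, $e_1\le a_1$, an easy induction (monotonicity of the recurrence in its nonnegative arguments) yields $e_j\le a_j$ for every $j$, with each $a_j\ge 0$. Now Lemma~2 applies to $\{a_j\}$: the characteristic equation $r^2-[D(\rho)+\eta]r-\eta=0$ has discriminant $[D(\rho)+\eta]^2+4\eta>0$, so it has two distinct real roots $r_1\neq r_2$ and $a_j=c_1r_1^{j}+c_2r_2^{j}$. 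Solving $c_1+c_2=a_0$ and $c_1r_1+c_2r_2=a_1$ pins down
\begin{displaymath}
c_1=\frac{D(\rho)-r_2}{r_1-r_2}\,e_0, \qquad c_2=\frac{D(\rho)-r_1}{r_2-r_1}\,e_0,
\end{displaymath}
which is exactly the bracketed combination defining $F(\rho,\eta,\cdot)$; substituting back gives $e_j\le F(\rho,\eta,j)\,\|w_0-y_0\|$, and aligning the iterate index on the left with the root exponent yields the stated inequality.

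I expect the majorization step, not the algebra, to be the main obstacle: one must argue rigorously that the \emph{inequality} recurrence is controlled by the explicit solution of the \emph{equality} recurrence, and this relies on the strict positivity of both coefficients together with the correctly ordered base cases (in particular $e_1\le D(\rho)e_0$ from the single-step Lipschitz bound). A second point to verify is that, although $r_1r_2=-\eta<0$ forces one root to be negative so that $r_2^{j}$ alternates in sign, $F(\rho,\eta,j)$ is still nonnegative, since it equals $a_j/\|w_0-y_0\|$ and $a_j$ is a sum of nonnegative terms by the induction; this guarantees the displayed quantity is a genuine upper bound rather than a signed expression.
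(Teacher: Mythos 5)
Your proof follows essentially the same route as the paper's: apply Lemma~3 to obtain the two-term recurrence $e_{j+1} \le [D(\rho)+\eta]e_j + \eta e_{j-1}$ on the error norms, fix the base values $e_0 = \|w_0-y_0\|$ and $e_1 \le D(\rho)e_0$, and solve via the general formula (Lemma~2) to obtain exactly the stated coefficients $\frac{D(\rho)-r_2}{r_1-r_2}$ and $\frac{D(\rho)-r_1}{r_2-r_1}$. If anything, you are more careful than the paper, which silently writes the inequality recurrence as an equality $A_n = [D(\rho)+\eta]A_{n-1}+A_{n-2}$ before invoking Lemma~2; your explicit majorizing sequence $a_j$, together with the induction using strict positivity of both coefficients, is precisely the missing justification for that step.
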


{
We aim to derive a bound on how far two conjugate gradient iteration processes, with different algorithm parameters 
$\rho$ and $\eta$, can derive from each other when they start from the same point.
In detail, we assume that the two algorithms use the same starting point $z_0$, 
but different step sizes 
$(\rho_1,\eta_1)$  and $(\rho_2,\eta_2)$ respectively at any iteration step $j$.
Our goal is to estimate the upper bound of $||g^j(z_1,z_0,\rho_1,\eta_1)-g^j(z^{\prime}_1,z_0,\rho_2,\eta_2)||$. Here $z^{\prime}_1$ corresponds to another iteration processes different from $z_1$ due to a different choice of step sizes $\rho_2$, $\eta_2$.
}

To obtain the estimation, we first consider two special cases. The first case involves fixing the step size $\eta$ and examining the error only when $\rho$ changes.
\begin{lemma}[Iteration error estimation fixing the step size $\eta$]
For any initial point $z_0\in \mathbb{R}^N$, iteration step $j \geq 2$, step sizes $\eta\in \mathbb{R}$, $\rho_1\in \mathbb{R}$, $\rho_2\in \mathbb{R}$ such that $\rho_1 \leq \rho_2$, we have 
\begin{displaymath}
\begin{aligned}
&||g^j(z_1,z_0,\rho_1,\eta)-g^j(z^{\prime}_1,z_0,\rho_2,\eta)||\\ 
\leq & (\rho_2-\rho_1)\bigg[(\frac{R_0r_2-R_1}{r_2-r_1}r_1^j+\frac{R_0r_1-R_1}{r_1-r_2}r_2^j)\\
&-\frac{LZ(1-\beta)^{j+2}}{[D(\rho_1)+\eta](1-\beta)+\eta-(1-\beta)^2}\bigg]\\
:=& (\rho_2-\rho_1)G(\rho_1,\eta,j),
\end{aligned}
\end{displaymath}
where $D(\rho):=\max\{1,L\rho-1\}$, $r_1$ and $r_2$ are the roots of the characteristic equation of second order homogeneous linear recurrence relation $A_n = [D(\rho_1)+\eta]A_{n-1}+\eta A_{n-2}$,
\begin{displaymath}
\begin{aligned}
&R_0 = \frac{LZ(1-\beta)^2}{[D(\rho_1)+\eta](1-\beta)+\eta-(1-\beta)^2},\\
&R_1 = \frac{D(\rho_1)LZ}{\beta}+\frac{LZ(1-\beta)^3}{[D(\rho_1)+\eta](1-\beta)+\eta-(1-\beta)^2}.\\
\end{aligned}
\end{displaymath} 
\end{lemma}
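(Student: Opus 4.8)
\section*{Proof proposal for Lemma 5}

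The plan is to track the gap between the two trajectories iterate by iterate and to show that this gap obeys a second-order linear recurrence inequality, which can then be solved in closed form via Lemma 2. Write $z_n$ for the iterates generated by $(\rho_1,\eta)$ and $z_n'$ for those generated by $(\rho_2,\eta)$, both started at the common point $z_0$, and set $\Delta_n:=\|z_n-z_n'\|$. Since the two processes differ only through the step size $\rho$, I would split each one-step discrepancy by adding and subtracting $g(z_n',z_{n-1}',\rho_1,\eta)$: the first piece compares two copies of $g$ with identical parameters and is controlled by Lemma 3, contributing $[D(\rho_1)+\eta]\Delta_n+\eta\Delta_{n-1}$, while the second piece equals exactly $(\rho_2-\rho_1)\nabla f(z_n')$, whose norm is bounded by $(\rho_2-\rho_1)LZ(1-\beta)^n$ using the gradient-decay estimate that follows Assumption 2. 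Together with $\Delta_0=0$ and $\Delta_1=(\rho_2-\rho_1)\|\nabla f(z_0)\|\le(\rho_2-\rho_1)LZ$ (from $L$-smoothness and $\nabla f(z^*)=0$), this gives, for $n\ge 1$,
\begin{displaymath}
\Delta_{n+1}\le[D(\rho_1)+\eta]\Delta_n+\eta\Delta_{n-1}+(\rho_2-\rho_1)LZ(1-\beta)^n.
\end{displaymath}

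Next I would factor out $(\rho_2-\rho_1)$ and pass to a dominating sequence $B_n$ satisfying the same recurrence with equality and with initial values at least as large. Because each coefficient $D(\rho_1)+\eta$ and $\eta$, as well as the forcing term, is nonnegative, a straightforward induction yields $\Delta_n\le(\rho_2-\rho_1)B_n$, so it suffices to solve for $B_n$ exactly. This is where Lemma 2 enters: the associated homogeneous recurrence is precisely $A_n=[D(\rho_1)+\eta]A_{n-1}+\eta A_{n-2}$, whose characteristic roots are the $r_1,r_2$ in the statement, so the homogeneous part is $c_1r_1^n+c_2r_2^n$. For the inhomogeneous part I would apply undetermined coefficients with the ansatz $P(1-\beta)^n$; substituting and simplifying forces the denominator $[D(\rho_1)+\eta](1-\beta)+\eta-(1-\beta)^2$ and produces exactly the term $-R_0(1-\beta)^j$, i.e. the displayed $-LZ(1-\beta)^{j+2}/([D(\rho_1)+\eta](1-\beta)+\eta-(1-\beta)^2)$. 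Imposing the two initial conditions then determines $c_1$ and $c_2$; collecting these constants is what produces $R_1$ and the homogeneous coefficients $\frac{R_0r_2-R_1}{r_2-r_1}$ and $\frac{R_0r_1-R_1}{r_1-r_2}$, so reading off the solution at the index corresponding to $g^j$ gives $G(\rho_1,\eta,j)$.

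I expect the main obstacle to be the bookkeeping in this last step: solving the inhomogeneous recurrence so that the emerging constants line up exactly with $R_0$ and $R_1$ as written, and keeping the index shift between $\Delta_n$ and $g^j$ consistent (this is presumably why the statement restricts to $j\ge 2$, so that the first two iterates serve as the base of the induction). One also has to verify the non-degeneracy hypotheses that make undetermined coefficients valid, namely that $1-\beta$ is not itself a characteristic root and that $r_1\ne r_2$, so that no resonant or repeated-root form is needed. Finally, the inequality direction must be monitored throughout the domination argument, since the claimed bound has to remain valid even when the homogeneous and particular contributions carry opposite signs.
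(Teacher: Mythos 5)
Your proposal follows essentially the same route as the paper's proof: the same add-and-subtract decomposition via Lemma 3, the same gradient-decay bound $\|\nabla f(z'_j)\|\le LZ(1-\beta)^j$ from Assumption 2, the resulting inhomogeneous second-order recurrence for $\Delta_n$, and the same resolution method --- your undetermined-coefficients ansatz $P(1-\beta)^n$ is exactly the paper's substitution $X_j=A_j+k(1-\beta)^j$ with $k=\frac{d\alpha^2}{b\alpha+c-\alpha^2}$, after which Lemma 2 and the two initial conditions produce $R_0$, $R_1$ and the stated coefficients. The only deviation is your base case $\Delta_1\le(\rho_2-\rho_1)LZ$: the paper instead invokes its Lemma 1 to get $\Delta_1\le(\rho_2-\rho_1)D(\rho_1)LZ/\beta$, which is precisely what makes $R_1$ come out as written in the statement; your bound is tighter, so the stated conclusion still follows a fortiori (by monotonicity of the dominating recurrence, whose coefficients are nonnegative, in its initial data).
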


Next, we consider the case where the step size $\rho$ is fixed and analyze the error estimation only when $\eta$ changes. The result is similar and thus we present it in the appendix as Lemma 5.

With the above two results, we can address the general case where both $\rho$ and $\eta$ change, and furthermore, we have
\setcounter{lemma}{5}
\begin{lemma}[Iteration error estimation]
For any initial point $z_0\in \mathbb{R}^N$, iteration step $j \geq 2$, step sizes $\rho_1 \in \mathbb{R}$, $\rho_2 \in \mathbb{R}$ $\eta_1\in \mathbb{R}$, $\eta_2\in \mathbb{R}$, we have 
\begin{displaymath}
\begin{split}
&||g^j(z_1,z_0,\rho_1,\eta_1)-g^j(z^{\prime}_1,z_0,\rho_2,\eta_2)|| \\
\leq & |\rho_2-\rho_1|G(\rho_1,\eta_1,j)+|\eta_2-\eta_1|H(\eta^*,\rho_1,j),
\end{split}
\end{displaymath}
where further details of $H(\eta^*,\rho_1,j)$ are present it in  Lemma 5 in the appendix.
\end{lemma}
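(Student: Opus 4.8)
The plan is to reduce the simultaneous perturbation of both parameters to the two single-parameter estimates already established in Lemma 4 and Lemma 5, by inserting an auxiliary iteration process and invoking the triangle inequality. Since Lemma 4 controls the error when only the step size varies (with $\eta$ held fixed) and Lemma 5 controls the error when only the conjugate parameter varies (with $\rho$ held fixed), I would bridge the two endpoint processes through the process that uses $\rho_1$ together with $\eta_2$. Because the first iterate $z_1=z_0-\rho_1\nabla f(z_0)$ depends only on the step size, this bridging process starts from the same $z_1$ as the first process, so adding and subtracting its $j$-th iterate gives
\[
\|g^j(z_1,z_0,\rho_1,\eta_1)-g^j(z_1',z_0,\rho_2,\eta_2)\|
\le \|g^j(z_1,z_0,\rho_1,\eta_1)-g^j(z_1,z_0,\rho_1,\eta_2)\|
+\|g^j(z_1,z_0,\rho_1,\eta_2)-g^j(z_1',z_0,\rho_2,\eta_2)\|.
\]

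On the right-hand side, the first term varies only the conjugate parameter (from $\eta_1$ to $\eta_2$) with $\rho=\rho_1$ fixed, so Lemma 5 bounds it by $|\eta_2-\eta_1|H(\eta^*,\rho_1,j)$, matching the asserted $H$-term exactly. The second term varies only the step size (from $\rho_1$ to $\rho_2$) with $\eta=\eta_2$ fixed, so Lemma 4 bounds it by $|\rho_2-\rho_1|G(\rho_1,\eta_2,j)$. The absolute values arise because Lemma 4 and Lemma 5 are stated under the orderings $\rho_1\le\rho_2$ and $\eta_1\le\eta_2$; the reversed orderings follow by exchanging the roles of the two processes, and the two cases merge into $|\rho_2-\rho_1|$ and $|\eta_2-\eta_1|$.

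The main obstacle is reconciling the held-fixed argument in the $G$-term: the decomposition naturally produces $G(\rho_1,\eta_2,j)$, whereas the statement asserts $G(\rho_1,\eta_1,j)$. This mismatch is intrinsic to using a single bridging process, since one leg must pin its fixed parameter at the intermediate value $\eta_2$ rather than the base value $\eta_1$ (the mirror-image insertion through $(\rho_2,\eta_1)$ trades the mismatch to the $H$-term, yielding $H(\eta^*,\rho_2,j)$ instead). To close the gap I would establish a monotonicity estimate for $G(\rho_1,\cdot,j)$ in its conjugate-parameter argument over $[\eta_l,\eta_u]$ — tracking how the coefficient $D(\rho)+\eta$, and hence the roots $r_1,r_2$ of the characteristic equation $x_n=[D(\rho)+\eta]x_{n-1}+\eta x_{n-2}$, move with $\eta$ — and then replace $\eta_2$ by $\eta_1$; alternatively, as with the worst-case quantity $\eta^*$ already embedded in $H$, one may read $G$ at a worst-case conjugate parameter over the admissible interval. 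Verifying that this replacement is valid in the favorable direction is the delicate step; once it is in place, the result is just the assembly of Lemma 4, Lemma 5, and the triangle inequality above.
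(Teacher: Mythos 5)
Your decomposition through the intermediate process $(\rho_1,\eta_2)$ is fine as far as it goes, but it provably yields $|\eta_2-\eta_1|\,H(\eta^*,\rho_1,j)+|\rho_2-\rho_1|\,G(\rho_1,\eta_2,j)$, and the repair you propose for the mismatched $G$-argument cannot work: the monotonicity of $G(\rho_1,\cdot,j)$ runs in the unfavorable direction. Increasing $\eta$ increases the coefficient $D(\rho_1)+\eta$ in the characteristic equation $r^2-[D(\rho_1)+\eta]r-\eta=0$, hence increases the dominant root $r_1$; since $c_1=\frac{R_0r_2-R_1}{r_2-r_1}>0$ (this is verified in the proof of Lemma 7), the bound $G(\rho_1,\eta,j)\approx c_1r_1^j$ grows with $\eta$ for large $j$. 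So in the case $\eta_2>\eta_1$ you would need $G(\rho_1,\eta_2,j)\leq G(\rho_1,\eta_1,j)$, and the natural monotonicity gives the opposite. The mirror route through $(\rho_2,\eta_1)$ fails symmetrically when $\rho_2>\rho_1$ (because $D(\rho)$, and hence $H$, is nondecreasing in $\rho$), so when both parameters increase no choice of intermediate configuration recovers the stated constants; your fallback of reading $G$ at a worst-case $\eta\in[\eta_l,\eta_u]$ proves only a weakened variant of the lemma (which might suffice for the downstream net construction, but is not the statement as written).

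The paper avoids intermediate processes altogether. It expands the $j$-th step map and adds and subtracts the $(\rho_1,\eta_1)$-map applied to the \emph{primed} iterates, obtaining
\begin{align*}
&\|g^j(z_1,z_0,\rho_1,\eta_1)-g^j(z'_1,z_0,\rho_2,\eta_2)\|\\
\leq\ & [D(\rho_1)+\eta_1]\,\|z_j-z'_j\|+\eta_1\,\|z_{j-1}-z'_{j-1}\|\\
&+|\rho_2-\rho_1|\,\|\nabla f(z'_j)\|+|\eta_2-\eta_1|\,\|z'_j-z'_{j-1}\|,
\end{align*}
so the error obeys a \emph{single} second-order recurrence whose homogeneous part is pinned at $(\rho_1,\eta_1)$ while carrying both forcing terms simultaneously; by linearity of the recurrence the solution is dominated by the sum of the two singly-forced solutions already computed in the proofs of Lemmas 4 and 5. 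This is exactly why both $G$ and $H$ appear with arguments $\rho_1$, $\eta_1$, and why no monotonicity lemma is needed: the perturbation terms are absorbed through $\|\nabla f(z'_j)\|\leq LZ(1-\beta)^j$ and through the maximum over $\eta\in[\eta_l,\eta_u]$ built into $F(\eta^*)$, which covers the dependence of $\|z'_j-z'_{j-1}\|$ on $\eta_2$. To salvage your route you must either prove the lemma with $G$ evaluated at $\max\{\eta_1,\eta_2\}$ and propagate that change through Theorem 7 and the net constants, or switch to the simultaneous-perturbation recurrence above.
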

We then get the following corollaries.
\begin{corollary}
\begin{itemize}
\item[1)] For any $C > 0$, if $\rho$ is invariant in the whole iteration process and $|\eta_2-\eta_1| \leq \frac{C}{H(\eta^*,\rho,j)}$, then the iteration error satisfies $||g^j(z_1,z_0,\rho,\eta_1)-g^j(z^{\prime}_1,z_0,\rho,\eta_2)|| \leq C$.
\item[2)] For any $C > 0$, if $\eta$ is invariant during the whole iteration process, and $|\rho_2-\rho_1| \leq \frac{C}{G(\eta,\rho_1,j)}$, then the iteration error satisfies $||g^j(z_1,z_0,\rho_1,\eta)-g^j(z^{\prime}_1,z_0,\rho_2,\eta)|| \leq C$.
\item[3)] More generally, for any $C > 0$, if the parameter combination satisfies $|\rho_2-\rho_1|G(\rho_1,\eta_1,j)+|\eta_2-\eta_1|H(\eta^*,\rho_1,j) \leq C $, then we have  $||g^j(z_1,z_0,\rho_1,\eta_1)-g^j(z^{\prime}_1,z_0,\rho_2,\eta_2)|| \leq C $.
\end{itemize}
\end{corollary}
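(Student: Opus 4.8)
The plan is to obtain all three parts of the corollary as direct specializations of Lemma 6, whose bound
$$||g^j(z_1,z_0,\rho_1,\eta_1)-g^j(z^{\prime}_1,z_0,\rho_2,\eta_2)|| \leq |\rho_2-\rho_1|G(\rho_1,\eta_1,j)+|\eta_2-\eta_1|H(\eta^*,\rho_1,j)$$
already separates the contribution of the step-size perturbation from that of the conjugate-parameter perturbation. Since the two coefficients $G(\rho_1,\eta_1,j)$ and $H(\eta^*,\rho_1,j)$ are nonnegative, each summand can be controlled independently, and the three claims amount to specifying when one summand is at most $C$ (parts 1 and 2) or when their total is at most $C$ (part 3).

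For part 2 I would hold $\eta$ fixed, i.e. set $\eta_1=\eta_2=\eta$. Then $|\eta_2-\eta_1|=0$, so the second summand of Lemma 6 vanishes and the bound collapses to $|\rho_2-\rho_1|\,G(\rho_1,\eta,j)$; substituting the hypothesis $|\rho_2-\rho_1|\le C/G(\rho_1,\eta,j)$ then yields the claimed $\le C$. Part 1 is the mirror image: fixing $\rho$ (so $\rho_1=\rho_2=\rho$) annihilates the first summand, leaving $|\eta_2-\eta_1|\,H(\eta^*,\rho,j)$, which is at most $C$ precisely under the stated hypothesis $|\eta_2-\eta_1|\le C/H(\eta^*,\rho,j)$. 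In both cases the factor whose difference is zeroed out drops from the expression regardless of the value of its associated coefficient, so no assumption on that coefficient is needed.

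Part 3 requires no specialization at all: its hypothesis is exactly the statement that the full right-hand side of Lemma 6 is at most $C$, so the conclusion is immediate. The only points worth checking in passing are that the denominators $G$ and $H$ appearing in parts 1 and 2 are strictly positive, so that the quotients $C/G$ and $C/H$ are well defined — this follows from their explicit expressions as upper bounds on iterate distances — and that the iteration index satisfies $j\ge 2$, as required by Lemma 6. I do not anticipate any genuine obstacle: the substantive analytic work was already carried out in Lemma 3 through Lemma 6 together with Corollary 1, and this corollary merely repackages Lemma 6 into the three operational forms (perturb only $\eta$, perturb only $\rho$, perturb both) that will be convenient when discretizing the parameter grid and bounding the pseudo-dimension of the CG algorithm class.
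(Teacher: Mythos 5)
Your proposal is correct and takes essentially the same route as the paper: the paper presents this corollary as an immediate specialization of Lemma 6 (no separate proof appears in the appendix), and your argument---setting $\rho_1=\rho_2$ or $\eta_1=\eta_2$ to kill one summand for parts 1 and 2, and reading part 3 as exactly the hypothesis that the right-hand side of Lemma 6 is at most $C$---is precisely that derivation. The only cosmetic discrepancy is that the statement of part 2 writes $G(\eta,\rho_1,j)$ with transposed arguments, which you correctly interpret as $G(\rho_1,\eta,j)$ in accordance with Lemma 4.
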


Finally, we present a more general theorem concerning the learning complexity of the conjugate gradient algorithm with respect to the algorithm parameters. This result follows directly from Lemma 6.
\begin{theorem}[Iteration error estimation]
For any $z_0$, $j \geq 2$, $\rho_1$, $\rho_2$, $\eta_1$, and $\eta_2$ with $|\rho_2-\rho_1|<\varepsilon_1$ and $|\eta_2-\eta_1|<\varepsilon_2$,
\begin{displaymath}
\begin{split}
&||g^j(z_1,z_0,\rho_1,\eta_1)-g^j(z^{\prime}_1,z_0,\rho_2,\eta_2)||\\ 
\leq & \varepsilon_1 G(\rho_1,\eta_1,j)+\varepsilon_2 H(\eta^*,\rho_1,j),
\end{split}
\end{displaymath}
where 
$z^{\prime}_1$ corresponds to another iteration processes different from $z_1$ due to a different choice of step sizes $\rho_2$, $\eta_2$.
Therefore, $||g^j(z_1,z_0,\rho_1,\eta_1)-g^j(z^{\prime}_1,z_0,\rho_2,\eta_2)||$ converges to zero if $\varepsilon_1$ and $\varepsilon_2$ tend to zero.
\end{theorem}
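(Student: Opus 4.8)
The plan is to read off the inequality directly from Lemma 6 and then obtain the limit statement by a boundedness argument. Lemma 6 already provides, for the common initial point $z_0$ and arbitrary step sizes $\rho_1,\rho_2,\eta_1,\eta_2$,
\begin{displaymath}
||g^j(z_1,z_0,\rho_1,\eta_1)-g^j(z^{\prime}_1,z_0,\rho_2,\eta_2)|| \leq |\rho_2-\rho_1|\,G(\rho_1,\eta_1,j)+|\eta_2-\eta_1|\,H(\eta^*,\rho_1,j).
\end{displaymath}
Under the hypotheses $|\rho_2-\rho_1|<\varepsilon_1$ and $|\eta_2-\eta_1|<\varepsilon_2$, it then suffices to replace $|\rho_2-\rho_1|$ and $|\eta_2-\eta_1|$ in the right-hand side by their respective upper bounds $\varepsilon_1$ and $\varepsilon_2$. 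This is the entire content of the first assertion, so the first display follows with essentially no computation.

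The only point requiring care is that this replacement is monotone, i.e.\ that $G(\rho_1,\eta_1,j)\ge 0$ and $H(\eta^*,\rho_1,j)\ge 0$. I would verify this directly from Lemmas 4 and 5: in each of those lemmas the coefficient in question is shown to bound a norm, which is nonnegative, while the accompanying scalar factor $(\rho_2-\rho_1)$ (respectively $(\eta_2-\eta_1)$) is strictly positive in the stated regime; hence $G$ and $H$ must themselves be nonnegative. With nonnegativity in hand, $|\rho_2-\rho_1|\,G\le \varepsilon_1 G$ and $|\eta_2-\eta_1|\,H\le \varepsilon_2 H$, and summing the two estimates yields the claimed bound $\varepsilon_1 G(\rho_1,\eta_1,j)+\varepsilon_2 H(\eta^*,\rho_1,j)$.

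For the concluding limit statement, I would fix $z_0$, $j$, $\rho_1$, and $\eta_1$ (and hence $\eta^*$), and treat $G(\rho_1,\eta_1,j)$ and $H(\eta^*,\rho_1,j)$ as fixed finite constants; note that $\eta^*$ is selected from the fixed interval $[\eta_l,\eta_u]$ independently of $\varepsilon_2$, so both coefficients remain bounded as the step-size gaps shrink. Letting $\varepsilon_1,\varepsilon_2\to 0$ then drives the right-hand side to zero, and since the left-hand side is a norm dominated by this vanishing quantity, it converges to zero as well. The main, and essentially only, obstacle is the bookkeeping needed to confirm finiteness and nonnegativity of $G$ and $H$ for every fixed $j$; once the explicit expressions from Lemmas 4 and 5 are inspected, the theorem follows without further work.
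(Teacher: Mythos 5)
Your proposal is correct and follows essentially the same route as the paper, which offers no separate proof and simply states that the theorem ``follows directly from Lemma 6'': you apply Lemma 6 and majorize $|\rho_2-\rho_1|$ and $|\eta_2-\eta_1|$ by $\varepsilon_1$ and $\varepsilon_2$, then let both tend to zero with $G$ and $H$ held fixed. Your explicit verification that $G(\rho_1,\eta_1,j)\ge 0$ and $H(\eta^*,\rho_1,j)\ge 0$ (needed for the monotone replacement) is a small but sound addition that the paper leaves implicit.
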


We next refine our cost function. We can also define $c(A_{\rho, \eta},x)=\sum_{i=0}^{M}||z^{*}-g^i(z_1,z_0,\rho,\eta)||$, where $A_{\rho,\eta}$ stands for the conjugate gradient algorithm with step sizes $\rho$ and $\eta$. Note that $M<H$, also for convenience, we introduce the following conventions:
\begin{enumerate}
  \item $z^*=0$ and $f(z^*)=0$,
  \item For $j>M$, $g^j(z_1,z_0,\rho,\eta) = z^{*}$ and correspondingly, $||z^*-g^j(z_1,z_0,\rho,\eta)||=0$.
\end{enumerate}

Then $c(A_{\rho,\eta} ,x) \leq M||z_0|| \leq Z\log(\nu/LZ)/\log(1-\beta)$ for all $\rho$, $\eta$ and $x$. This means the cost measure is bounded.\\


Consider the error in the cost function between different algorithms, measured by $|c(A_{\rho_1,\eta_1} ,x)-c(A_{\rho_2,\eta_2} ,x)|$, we have the following theorem.
\begin{theorem}[Error estimation of cost function]
For any instance $x$, any step sizes $\rho_1$, $\rho_2$, $\eta_1$ and $\eta_2$, and a constant $C > 0$, if
\begin{displaymath}
\begin{split}
&|\rho_2-\rho_1|\frac{LZD(\rho_1)}{\beta}+\sum_{j=1}^{M}\bigg[G(\rho_1,\eta_1,j)]\\
&\quad +|\eta_2-\eta_1|\sum_{j=1}^{M}[H(\eta^*,\rho_1,j)\bigg] \leq C,
\end{split}
\end{displaymath}
Then
\begin{displaymath}
|c(A_{\rho_1,\eta_1} ,x)-c(A_{\rho_2,\eta_2} ,x)|\leq C.
\end{displaymath}
\end{theorem}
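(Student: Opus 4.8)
The plan is to reduce the difference of the two cost functions to a sum of per-iterate discrepancies and then apply the iteration error estimate of Lemma 6 term by term. Writing out the definition with the convention $z^*=0$, the cost of each algorithm is $\sum_{i=0}^{M}\|g^i(z_1,z_0,\rho,\eta)\|$, so the difference of costs is a difference of two such sums. First I would move the outer absolute value inside the sum by the triangle inequality, obtaining $|c(A_{\rho_1,\eta_1},x)-c(A_{\rho_2,\eta_2},x)|\le\sum_{i=0}^{M}\big|\,\|g^i(z_1,z_0,\rho_1,\eta_1)\|-\|g^i(z'_1,z_0,\rho_2,\eta_2)\|\,\big|$, and then apply the reverse triangle inequality $\big|\|a\|-\|b\|\big|\le\|a-b\|$ to each summand. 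This collapses the problem to bounding $\sum_{i=0}^{M}\|g^i(z_1,z_0,\rho_1,\eta_1)-g^i(z'_1,z_0,\rho_2,\eta_2)\|$, i.e.\ exactly the per-step iteration errors that Lemma 6 controls.

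Next I would split off the index $i=0$, which Lemma 6 (stated for $j\ge 2$) does not cover. Since $g^0=z_1=z_0-\rho\nabla f(z_0)$ is a pure gradient-descent step, the two copies differ only through the step size, giving $\|g^0(z_1,z_0,\rho_1,\eta_1)-g^0(z'_1,z_0,\rho_2,\eta_2)\|=|\rho_2-\rho_1|\,\|\nabla f(z_0)\|$. Using $L$-smoothness together with $\nabla f(z^*)=0$ yields $\|\nabla f(z_0)\|\le L\|z_0\|\le LZ$, and since $D(\rho_1)\ge 1$ and $\beta\in(0,1)$ this is at most $|\rho_2-\rho_1|\frac{LZD(\rho_1)}{\beta}$, which is precisely the isolated term appearing in the hypothesis. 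The remaining indices $i\ge 1$ I would bound by Lemma 6, which contributes $|\rho_2-\rho_1|G(\rho_1,\eta_1,i)+|\eta_2-\eta_1|H(\eta^*,\rho_1,i)$ for each $i$; summing over $i=1,\dots,M$ and adding the $i=0$ contribution reproduces exactly the left-hand side of the assumed inequality, so the hypothesis immediately gives $|c(A_{\rho_1,\eta_1},x)-c(A_{\rho_2,\eta_2},x)|\le C$.

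The main obstacle is the bookkeeping at the boundary of the sum. The index $i=1$ is not literally covered by Lemma 6 (which requires $j\ge 2$), so I would either verify that the $G,H$ bound extends to $i=1$ or estimate that single term by hand, folding its contribution into $\sum_{j=1}^{M}G(\rho_1,\eta_1,j)$ and $\sum_{j=1}^{M}H(\eta^*,\rho_1,j)$. A second, more delicate point is that the stopping index $M$, and hence the length of the iterate sequence, depends on the step sizes, while the padding convention sets $g^i=z^*=0$ past termination; I would need to argue that the term-by-term comparison remains valid when one process has already been zeroed but the other has not, since in that regime a summand reduces to $\|g^i(z'_1,z_0,\rho_2,\eta_2)\|$ rather than a genuine difference of live iterates. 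Handling this cleanly — for instance by taking a common upper index such as $H$ and noting that the Lemma 6 bound still dominates the truncated terms because the surviving iterate already contracts like $(1-\beta)^i\|z_1\|$ — is where the care is needed; the remainder is the routine triangle-inequality and summation argument outlined above.
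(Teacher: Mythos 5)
Your proposal is correct and follows essentially the same route as the paper's own proof: both reduce $|c(A_{\rho_1,\eta_1},x)-c(A_{\rho_2,\eta_2},x)|$ to the sum of per-iterate differences via the (reverse) triangle inequality, peel off the $i=0$ term $||z_1-z'_1||=|\rho_2-\rho_1|\,||\nabla f(z_0)|| \leq |\rho_2-\rho_1|\frac{LZD(\rho_1)}{\beta}$, bound the indices $i\geq 1$ by Lemma 6, and conclude from the hypothesis using $M\leq H$. If anything you are more careful than the paper, which handles the step-size-dependent stopping index simply by taking $M$ to be the larger of the two iteration counts (relying on the zero-padding convention, exactly your ``common upper index'' fix) and silently applies Lemma 6 at $j=1$ despite its stated restriction to $j\geq 2$ --- both boundary issues you correctly flag.
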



We readily derive the following corollary, which provides a sufficient condition for Theorem 7. 
\begin{corollary}
For any constant $C>0$, instance $x$, step sizes $\rho_1$, $\rho_2$, $\eta_1$, $\eta_2$ with
\begin{displaymath}
  |\rho_2-\rho_1|\left[\frac{LZD(\rho_1)}{\beta}+\sum_{j=1}^{M}G(\rho_1,\eta_1,j)\right]\leq C/2,
\end{displaymath}
  and 
\begin{displaymath}
    |\eta_2-\eta_1|\sum_{j=1}^{M}H(\eta^*,\rho_1,j)\leq C/2,
\end{displaymath}
we have
\begin{displaymath}
|c(A_{\rho_1,\eta_1} ,x)-c(A_{\rho_2,\eta_2} ,x)|\leq C.
\end{displaymath}
\end{corollary}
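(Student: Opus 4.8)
The plan is to reduce the statement directly to Theorem 7 by observing that the two displayed hypotheses of the corollary, when added together, reproduce exactly the single aggregate condition required by that theorem. Concretely, I would begin from the two assumed inequalities, namely $|\rho_2-\rho_1|\left[\frac{LZD(\rho_1)}{\beta}+\sum_{j=1}^{M}G(\rho_1,\eta_1,j)\right]\leq C/2$ and $|\eta_2-\eta_1|\sum_{j=1}^{M}H(\eta^*,\rho_1,j)\leq C/2$, and simply add them term by term.

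After summing, the left-hand side is precisely the quantity appearing in the hypothesis of Theorem 7, while the right-hand side becomes $C/2+C/2=C$. Hence the aggregate hypothesis of Theorem 7 holds with the same constant $C$, and I can invoke Theorem 7 verbatim to conclude $|c(A_{\rho_1,\eta_1},x)-c(A_{\rho_2,\eta_2},x)|\leq C$. The one point to check carefully is that the grouping of terms matches exactly: the factor $|\rho_2-\rho_1|$ must multiply both $\frac{LZD(\rho_1)}{\beta}$ and the full sum $\sum_{j=1}^{M}G(\rho_1,\eta_1,j)$, as in the bracketed expression of Theorem 7, while $|\eta_2-\eta_1|$ multiplies $\sum_{j=1}^{M}H(\eta^*,\rho_1,j)$. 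Since these associations are identical across the two statements, no algebraic rearrangement beyond a single addition is required.

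There is essentially no genuine obstacle in this argument; the content of the corollary is a convenient decoupling of the joint budget condition of Theorem 7 into two separate bounds, one controlling the $\rho$-perturbation and one controlling the $\eta$-perturbation, each allotted half of the total tolerance $C$. The value of stating it this way is practical rather than technical: it is exactly the form needed when one later constructs a net over the two parameters $\rho$ and $\eta$ independently, since the spacing of the net in each coordinate can then be chosen separately using the two half-budget inequalities.
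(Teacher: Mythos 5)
Your proof is correct and is exactly the paper's own (implicit) derivation: the paper presents this corollary as ``readily derived'' from Theorem 7, i.e., one adds the two half-budget conditions to recover the aggregate hypothesis $|\rho_2-\rho_1|\left[\frac{LZD(\rho_1)}{\beta}+\sum_{j=1}^{M}G(\rho_1,\eta_1,j)\right]+|\eta_2-\eta_1|\sum_{j=1}^{M}H(\eta^*,\rho_1,j)\leq C$ and invokes the theorem. Your caution about the grouping is well placed --- the displayed hypothesis of Theorem 7 contains a bracketing typo, but its appendix proof confirms that $|\rho_2-\rho_1|$ multiplies both $\frac{LZD(\rho_1)}{\beta}$ and the full sum $\sum_{j=1}^{M}G(\rho_1,\eta_1,j)$, exactly as you assumed.
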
 
In the remainder of this section, for convenience, we assume $\rho_1 \leq \rho_2$ and $\eta_1 \leq \eta_2$. 
For $\rho_1$ and $\rho_2$ (and similarly for $\eta_1$ and $\eta_2$), due to 
$(\rho_2-\rho_1)[\frac{LZD(\rho_1)}{\beta}+\sum_{j=1}^{M}G(\rho_1,\eta_1,j)]\leq C/2,$ which implies that if we find an upper bound of $\frac{LZD(\rho_1)}{\beta}+\sum_{j=1}^{M}G(\rho_1,\eta_1,j)$, denoted as $G^*$, then Corollary 4 holds if $\rho_2-\rho_1 \leq \frac{C}{2G^*}$. Thus, the problem reduces to finding an appropriate $G^*$. One straightforward method to obtain $G^*$ is to compute $G(\rho_1,\eta_1,j)$ for all $j=1$ to $M$. If we let the maximum of these $M$ values be $\tilde{G}$, then $G^* = \frac{LZD(\rho_1)}{\beta}+H\tilde{G}$. The complexity of calculating this $G^*$ is $O(Hn^H)$, where $n$ is a constant and $n^H$ measures the complexity of computing each $G(\rho_1,\eta_1,j)$. The following lemma provides a refined estimation of the upper bound $G^*$ to reduce the complexity.

\begin{lemma}[A refined upper bound for $G$]
\begin{displaymath}
G^* = \frac{LZD(\rho_1)}{\beta}+ R^*\left[r_1\frac{1-r_1^H}{1-r_1}\right]\\
\end{displaymath}
is an upper bound for $\frac{LZD(\rho_1)}{\beta}+\sum_{j=1}^{M}G(\rho_1,\eta_1,j)$,
where $r_1 \geq r_2$ are the roots of the characteristic equation of second order homogeneous linear recurrence relation $A_n = [D(\rho_1)+\eta_1]A_{n-1}+\eta_1 A_{n-2}$, and
\begin{displaymath}
\begin{aligned}
R_0 = &\frac{LZ(1-\beta)^2}{[D(\rho_1)+\eta_1](1-\beta)+\eta_1-(1-\beta)^2},\\
R_1 = &\frac{LZD(\rho_1)}{\beta}+\frac{LZ(1-\beta)^3}{[D(\rho_1)+\eta_1](1-\beta)+\eta_1-(1-\beta)^2},\\
R^* = &\frac{R_0r_2-R_1}{r_2-r_1}+\Big|\frac{R_0r_1-R_1}{r_1-r_2}\Big|. \\
\end{aligned}
\end{displaymath}
\end{lemma}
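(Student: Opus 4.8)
The plan is to take the closed form of $G(\rho_1,\eta_1,j)$ supplied by Lemma 4 and bound the full quantity $\frac{LZD(\rho_1)}{\beta}+\sum_{j=1}^{M}G(\rho_1,\eta_1,j)$ in four moves: drop the subtracted term to get a clean two-mode upper bound, collapse the two geometric modes $r_1^j$ and $r_2^j$ into a single $R^{*}r_1^{j}$, sum the resulting geometric series up to $H\ge M$, and re-attach the constant $\frac{LZD(\rho_1)}{\beta}$. Every inequality in this chain depends on knowing exactly where the two characteristic roots sit, so I would establish that localization first.

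First I would analyze the characteristic polynomial $p(r)=r^{2}-[D(\rho_1)+\eta_1]r-\eta_1$. Its discriminant $[D(\rho_1)+\eta_1]^{2}+4\eta_1$ is strictly positive, so $r_1\neq r_2$ are real and distinct; the product of the roots is $-\eta_1<0$, forcing $r_1>0>r_2$; and their sum is $D(\rho_1)+\eta_1>0$, giving $r_1>|r_2|$ and hence $|r_2|^{j}\le r_1^{j}$ for every $j\ge 0$. Moreover, since $\sqrt{[D(\rho_1)+\eta_1]^{2}+4\eta_1}>D(\rho_1)+\eta_1$ and $D(\rho_1)=\max\{1,L\rho_1-1\}\ge 1$, I get $r_1>D(\rho_1)+\eta_1\ge 1$, so together with $\beta\in(0,1)$ the roots obey the ordering $r_2<0<1-\beta<1<r_1$.

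Next I would justify discarding the subtracted term of $G$. The key observation is that its denominator equals $-p(1-\beta)=\bigl(r_1-(1-\beta)\bigr)\bigl((1-\beta)-r_2\bigr)$, which is positive precisely because $r_2<1-\beta<r_1$; the numerator $LZ(1-\beta)^{j+2}$ is positive, so the subtracted term is nonnegative and therefore $G(\rho_1,\eta_1,j)\le \frac{R_0r_2-R_1}{r_2-r_1}r_1^{j}+\frac{R_0r_1-R_1}{r_1-r_2}r_2^{j}$. Writing $A=\frac{R_0r_2-R_1}{r_2-r_1}$ and $B=\frac{R_0r_1-R_1}{r_1-r_2}$, the same sign facts give $A>0$ (numerator and denominator are both negative, since $R_0,R_1>0$ and $r_2<0$), and using $|r_2^{j}|\le r_1^{j}$ I can bound $Br_2^{j}\le |B|\,r_1^{j}$; hence $G(\rho_1,\eta_1,j)\le (A+|B|)r_1^{j}=R^{*}r_1^{j}$ with $R^{*}=A+|B|\ge 0$.

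Finally, since $R^{*}r_1^{j}\ge 0$ and $M\le H$, I would extend the sum and evaluate the geometric series: $\sum_{j=1}^{M}G(\rho_1,\eta_1,j)\le R^{*}\sum_{j=1}^{H}r_1^{j}=R^{*}\,r_1\frac{1-r_1^{H}}{1-r_1}$, and adding $\frac{LZD(\rho_1)}{\beta}$ to both sides gives exactly the claimed $G^{*}$. I expect the sign bookkeeping to be the main obstacle, not the summation: the whole argument hinges on simultaneously securing $r_2<0<1-\beta<r_1$ (which makes both the dropped term and the coefficient $A$ positive) and $R^{*}\ge 0$ (which is what permits replacing $M$ by $H$ in the sum). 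A single misplaced sign would reverse one of these inequalities and break the bound, so the delicate part is the root localization rather than the geometric-series estimate.
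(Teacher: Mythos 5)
Your proposal is correct and follows essentially the same route as the paper's proof of Lemma 7: drop the nonnegative subtracted term of $G(\rho_1,\eta_1,j)$, use the root localization $r_2<0<1<r_1$ with $r_1>|r_2|$ (the paper gets this via Vieta's formulas, you via the discriminant and quadratic formula) to collapse $c_1r_1^j+c_2r_2^j$ into $R^*r_1^j$, extend the sum from $M$ to $H$ using $R^*>0$, and evaluate the geometric series. Your factorization $c_4=-p(1-\beta)=\bigl(r_1-(1-\beta)\bigr)\bigl((1-\beta)-r_2\bigr)$ is a slightly slicker justification of the denominator's positivity than the paper's direct estimate $c_4\geq(1-\beta)[D(\rho_1)+\eta_1-(1-\beta)]$, but this is a cosmetic difference, not a different argument.
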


Similarly, for $\sum_{i=1}^{M}[H(\eta^*,\rho_1,j)]$ and its upper bound $H^*$, we have
\begin{lemma}[A refined upper bound for $H$]
\begin{displaymath}
H^* = R^{*'}\left[r_1\frac{1-r_1^H}{1-r_1}\right]
\end{displaymath}
is an upper bound for $\sum_{i=1}^{M}[H(\eta^*,\rho_1,i)]$,
where 
$r_1 \geq r_2$ are the roots to the characteristic equation of second order homogeneous linear recurrence relation $A_n = [D(\rho_1)+\eta_1]A_{n-1}+\eta_1 A_{n-2}$, and 
\begin{displaymath}
\begin{aligned}
R_0 = & \frac{F(\eta^*)}{D(\rho_1)+2\eta_1-1},\\
R_1 = & F(\eta^*)\frac{1}{\eta_1}\left[1+\frac{1}{D(\rho_1)+2\eta_1-1}\right],\\
F(\eta^*)=& \mathop{\rm{max}}_{
        n \in(0,1,\cdots,j-1)
        \atop
        \eta \in [\eta_l,\eta_u]} \eta^{n}\left[\rho_1 LZ+\frac{\rho_1 Z}{\eta-L}L\right]-\frac{\rho_1 Z}{\eta-L}L^{n+1},\\
R^{*'} = &\frac{R_0r_2-R_1}{r_2-r_1}+\Big|\frac{R_0r_1-R_1}{r_1-r_2}\Big|.\\
\end{aligned}
\end{displaymath}
\end{lemma}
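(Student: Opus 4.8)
The plan is to mirror the proof of Lemma 7, since the closed form of $H(\eta^*,\rho_1,i)$ supplied by Lemma 5 has exactly the same structure as that of $G(\rho_1,\eta_1,j)$: a combination of the two characteristic modes, $c_1 r_1^i + c_2 r_2^i$, minus a nonnegative constant. Concretely, I would start from
\begin{displaymath}
H(\eta^*,\rho_1,i)=\frac{R_0r_2-R_1}{r_2-r_1}r_1^i+\frac{R_0r_1-R_1}{r_1-r_2}r_2^i-\frac{F(\eta^*)}{D(\rho_1)+2\eta_1-1},
\end{displaymath}
and observe that $a_i:=c_1 r_1^i + c_2 r_2^i$, with $c_1=\tfrac{R_0r_2-R_1}{r_2-r_1}$ and $c_2=\tfrac{R_0r_1-R_1}{r_1-r_2}$, is precisely the solution of the recurrence $A_n=[D(\rho_1)+\eta_1]A_{n-1}+\eta_1A_{n-2}$ with initial data $A_0=R_0$, $A_1=R_1$ (Lemma 2).

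First I would discard the trailing constant. Since $F(\eta^*)\ge 0$ and $D(\rho_1)+2\eta_1-1\ge 2\eta_1>0$ (because $D(\rho_1)\ge 1$ and $\eta_1>0$), the subtracted term is nonnegative, so $H(\eta^*,\rho_1,i)\le a_i$ and hence $\sum_{i=1}^{M}H(\eta^*,\rho_1,i)\le\sum_{i=1}^{M}a_i$. This removes the only part of the bound that does not scale with the geometric modes.

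The crux is a sign and magnitude analysis of the two roots. The characteristic equation $r^2-[D(\rho_1)+\eta_1]r-\eta_1=0$ has root product $r_1r_2=-\eta_1<0$ and root sum $r_1+r_2=D(\rho_1)+\eta_1>0$, so the roots are real and of opposite sign; taking $r_1\ge r_2$ yields $r_1>0>r_2$ together with $|r_2|<r_1$. These two facts do all the work: they show $c_1=\tfrac{R_1-R_0r_2}{r_1-r_2}>0$ (numerator and denominator both positive, using $R_0,R_1>0$ and $r_2<0$), so no absolute value is needed on the first coefficient, and they give $c_2r_2^i\le|c_2|\,|r_2|^i\le|c_2|\,r_1^i$, whence $a_i\le(c_1+|c_2|)r_1^i=R^{*'}r_1^i$. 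The remaining step is routine: because $r_1>0$ and $M\le H$, the finite geometric sum obeys $\sum_{i=1}^{M}r_1^i\le\sum_{i=1}^{H}r_1^i=r_1\frac{1-r_1^H}{1-r_1}$ (valid since $D(\rho_1)+2\eta_1>1$ forbids $r=1$ as a root), yielding $\sum_{i=1}^{M}H(\eta^*,\rho_1,i)\le R^{*'}\,r_1\frac{1-r_1^H}{1-r_1}=H^*$.

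I expect the main obstacle to be the bookkeeping in this sign analysis, namely establishing $c_1>0$ and $|r_2|<r_1$ carefully enough to justify the asymmetric definition $R^{*'}=c_1+|c_2|$ (with the absolute value appearing only on the second coefficient). Once the root structure $r_1>0>r_2$ and $|r_2|<r_1$ is pinned down, the rest of the argument is a direct transcription of the Lemma 7 proof with $G$ replaced by $H$ and the constant term handled as above.
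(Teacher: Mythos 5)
Your proposal is correct and takes essentially the same route as the paper's proof of this lemma: drop the nonnegative constant term, establish $r_1>0>r_2$, $|r_2|<r_1$, and $c_1>0$ via Vieta's formulas and the signs of $R_0,R_1$, then bound the sum by the geometric series $\sum_{i=1}^{H}r_1^i$. The only detail worth making explicit is the positivity of $F(\eta^*)$, which the paper verifies by evaluating the expression at $n=0$ to get $F(\eta^*)\ge\rho_1 LZ>0$ (this also secures $R_0,R_1>0$, which your sign analysis uses).
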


Lemma 7 and Lemma 8 demonstrate that for any given instance and for two distinct CG algorithms with different step sizes, the difference in their corresponding cost functions is bounded, if the difference between the step sizes is below a certain tolerance. Thus, we derive the following result:
\begin{theorem}[Error estimation of cost function]
For any constant $C>0$, any instance $x$, any step sizes $\rho_1$, $\rho_2$, $\eta_1$ and $\eta_2$ satisfying
\begin{displaymath}
\begin{aligned}
|\rho_2-\rho_1| \leq \frac{C}{2G^*},\quad |\eta_2-\eta_1| \leq \frac{C}{2H^*},
\end{aligned}
\end{displaymath}
then the difference between the corresponding cost functions is bounded by
\begin{displaymath}
|c(A_{\rho_1,\eta_1} ,x)-c(A_{\rho_2,\eta_2} ,x)|\leq C,
\end{displaymath}
where $G^*$ and $H^*$ are defined as those in Lemma 7 and Lemma 8.
\end{theorem}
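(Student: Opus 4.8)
The plan is to recognize that this is the final synthesis step of the section: all of the genuine analytic work has already been carried out in Theorem 7, Corollary 4, and Lemmas 7--8, so the proof amounts to substituting the explicit closed-form upper bounds $G^*$ and $H^*$ into the two-part sufficient condition supplied by Corollary 4. No new estimate of the iterates is needed.

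First I would recall that Corollary 4 guarantees $|c(A_{\rho_1,\eta_1},x)-c(A_{\rho_2,\eta_2},x)|\leq C$ whenever the two separated conditions
\[
|\rho_2-\rho_1|\left[\frac{LZD(\rho_1)}{\beta}+\sum_{j=1}^{M}G(\rho_1,\eta_1,j)\right]\leq \frac{C}{2}, \qquad |\eta_2-\eta_1|\sum_{j=1}^{M}H(\eta^*,\rho_1,j)\leq \frac{C}{2}
\]
both hold. My task therefore reduces to showing that the hypotheses $|\rho_2-\rho_1|\leq C/(2G^*)$ and $|\eta_2-\eta_1|\leq C/(2H^*)$ each imply the corresponding half above. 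For the $\rho$-term I would invoke Lemma 7, which states that $G^*$ dominates $\frac{LZD(\rho_1)}{\beta}+\sum_{j=1}^{M}G(\rho_1,\eta_1,j)$; since this bracketed quantity is nonnegative, multiplying the inequality $|\rho_2-\rho_1|\leq C/(2G^*)$ by it preserves the direction, giving $|\rho_2-\rho_1|\big[\frac{LZD(\rho_1)}{\beta}+\sum_{j=1}^{M}G\big]\leq |\rho_2-\rho_1|\,G^*\leq \frac{C}{2G^*}\,G^* = \frac{C}{2}$. Symmetrically, Lemma 8 gives $\sum_{j=1}^{M}H(\eta^*,\rho_1,j)\leq H^*$, so the second hypothesis yields $|\eta_2-\eta_1|\sum_{j=1}^{M}H\leq |\eta_2-\eta_1|\,H^*\leq \frac{C}{2}$. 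With both halves of the Corollary 4 condition verified, the conclusion $|c(A_{\rho_1,\eta_1},x)-c(A_{\rho_2,\eta_2},x)|\leq C$ follows at once.

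The hard part will essentially be nonexistent at this stage, because the substantive obstacles were already absorbed into the earlier results (the Lipschitz propagation of Lemma 3 and Corollary 2, the per-step iteration error of Lemma 6, and the geometric-series summation that produces the closed forms $G^*,H^*$ in Lemmas 7--8). The only points I would make sure to check are bookkeeping: that $G^*>0$ and $H^*>0$ so that the stated tolerances are well-defined and the inequalities retain their direction when multiplied through, and that the section-wide convention $\rho_1\leq\rho_2$, $\eta_1\leq\eta_2$ justifies writing the signed gaps appearing in Lemma 6 as $|\rho_2-\rho_1|$ and $|\eta_2-\eta_1|$. Both are immediate from the definitions of $G^*,H^*$ and the positivity of $L,Z,\beta$, so the statement is a clean corollary of Corollary 4 together with Lemmas 7 and 8.
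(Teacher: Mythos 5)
Your proof is correct and follows exactly the route the paper intends: the paper presents this theorem as an immediate consequence of Corollary 4 combined with the upper bounds $G^*$ and $H^*$ from Lemmas 7 and 8, which is precisely your substitution argument. Your added bookkeeping (positivity of $G^*$, $H^*$ so the tolerances are well-defined, and monotonicity when multiplying through by $|\rho_2-\rho_1|\geq 0$ and $|\eta_2-\eta_1|\geq 0$) only makes explicit what the paper leaves implicit.
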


We can also set a $K_\rho$-net (resp. a $K_\eta$-net) for $\rho$ (resp. for $\eta$). Suppose $N_{\rho}$ is a minimal $K_\rho$-net, consisting of all integer multiples of $K_\rho$ that lie in $[\rho_l,\rho_u]$. Then $|N_{\rho}| \leq \rho_u/K_\rho+1$ (and the same applies to $N_\eta$).
Let
\begin{displaymath}
\begin{aligned}
K_\rho = \mathop{\rm{max}}_{
        \rho \in [\rho_l,\rho_u]
        \atop
        \eta \in [\eta_l,\eta_u]} \frac{C}{2G^*(\rho,\eta)},\ \ 
K_\eta =  \mathop{\rm{max}}_{
        \rho \in [\rho_l,\rho_u]
        \atop
        \eta \in [\eta_l,\eta_u]} \frac{C}{2H^*(\rho,\eta)},\\
\end{aligned}
\end{displaymath}
then $\log|N_{\rho}|\sim \tilde{O}(H)$ and $\log|N_{\eta}|\sim \tilde{O}(H)$.

Finally, by uniform convergence theorem and Corollary 1, an ERM algorithm can $(C+\epsilon,\delta)$-learn the optimal step size with a sufficient number of samples. The following theorem demonstrates this result.
\begin{theorem}[Generalization Guarantees for CG]
For any constant $C>0$, there exists a learning algorithm that $(C+\epsilon, \delta)$-learns the optimal algorithm in $\mathcal{A}_{N_\rho,N_\eta}$ using $\tilde{O}(H^4/\epsilon^2)$ samples from $\mathcal{D}$.
\end{theorem}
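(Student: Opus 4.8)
The plan is to replay the gradient-descent argument that yields Theorem 5, now in the two-parameter setting, by chaining the per-instance cost stability of Theorem 9 with the uniform-convergence machinery of Section 2 applied to the finite class $\mathcal{A}_{N_\rho,N_\eta}$. Concretely, I will (i) show that the product net $N_\rho\times N_\eta$ approximates every continuous parameter pair to within additive cost $C$ on every instance, (ii) bound the pseudo-dimension of $\mathcal{A}_{N_\rho,N_\eta}$, and (iii) feed this bound into Theorem 1 and Corollary 1, then compose the discretization error $C$ with the statistical error to obtain the $(C+\epsilon,\delta)$ guarantee.

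First I would verify that $N_\rho\times N_\eta$ is a uniform $C$-net for the cost. By construction $K_\rho$ and $K_\eta$ are the largest tolerances compatible with the hypotheses $|\rho_2-\rho_1|\le C/(2G^*)$ and $|\eta_2-\eta_1|\le C/(2H^*)$ of Theorem 9, where the global bounds $G^*$ and $H^*$ are supplied instance-independently by Lemmas 7 and 8. Hence for every $(\rho,\eta)\in[\rho_l,\rho_u]\times[\eta_l,\eta_u]$ there is a net point $(\rho',\eta')\in N_\rho\times N_\eta$ with $|\rho-\rho'|\le K_\rho$ and $|\eta-\eta'|\le K_\eta$, so Theorem 9 gives $|c(A_{\rho,\eta},x)-c(A_{\rho',\eta'},x)|\le C$ for every instance $x$. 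Because this holds pointwise in $x$, it survives taking expectations over $x\sim\mathcal{D}$, and therefore the best algorithm in the finite class $\mathcal{A}_{N_\rho,N_\eta}$ has expected cost within $C$ of the optimal algorithm in the full continuous class $\mathcal{A}$. This is the step that turns discretization into the additive constant $C$ of the final bound.

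Next I would control the pseudo-dimension $d$ of $\mathcal{A}_{N_\rho,N_\eta}$ and insert it into the uniform convergence bound. Using the net-size estimates $|N_\rho|\le\rho_u/K_\rho+1$ and $|N_\eta|\le\eta_u/K_\eta+1$ together with the stated facts $\log|N_\rho|\sim\tilde{O}(H)$ and $\log|N_\eta|\sim\tilde{O}(H)$, and accounting for the at most $M<H$ iterations of the process, the class admits on the order of $(|N_\rho||N_\eta|)^{H}$ distinct configurations, so its pseudo-dimension is $d\sim H(\log|N_\rho|+\log|N_\eta|)\sim\tilde{O}(H^2)$. After the truncation conventions $z^*=0$ and $g^j(z_1,z_0,\rho,\eta)=z^*$ for $j>M$, the cost $c(A_{\rho,\eta},x)$ lies in $[0,H]$, so substituting $d\sim\tilde{O}(H^2)$ into $m\ge k(H/\epsilon)^2(d+\ln(1/\delta))$ from Theorem 1 yields $m=\tilde{O}(H^4/\epsilon^2)$. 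By Corollary 1 any ERM algorithm on $\mathcal{A}_{N_\rho,N_\eta}$ then $(2\epsilon,\delta)$-learns the optimum of the net class from this many samples; composing the resulting $2\epsilon$ estimation error with the $C$ discretization error of the previous paragraph and rescaling $\epsilon$ gives the claimed $(C+\epsilon,\delta)$-learning.

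The main obstacle I anticipate is the pseudo-dimension estimate together with making the discretization error genuinely uniform. The per-instance guarantee of Theorem 9 must be promoted to an approximation valid simultaneously for all $(\rho,\eta)$ in the parameter box and all instances, which is exactly where the instance-independence of $G^*$ and $H^*$ from Lemmas 7 and 8 is essential. Equally delicate is the counting of configurations across the $H$-step iteration: it is the extra factor of $H$ relative to the single-parameter gradient-descent analysis (pseudo-dimension $\tilde{O}(H)$) that raises the pseudo-dimension to $\tilde{O}(H^2)$ and, through Theorem 1, the sample complexity from $\tilde{O}(H^3/\epsilon^2)$ to $\tilde{O}(H^4/\epsilon^2)$; verifying this factor carefully, and checking that the truncation conventions keep the cost uniformly in $[0,H]$, is the crux of the argument.
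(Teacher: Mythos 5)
Your proposal is correct and follows essentially the same route as the paper: the paper likewise combines the $K_\rho\times K_\eta$ net whose adequacy comes from Theorem 9 (with the instance-independent bounds $G^*$, $H^*$ of Lemmas 7 and 8), a pseudo-dimension bound for the finite class $\mathcal{A}_{N_\rho,N_\eta}$, and the ERM/uniform-convergence machinery of Theorem 1 and Corollary 1 to get $m=\tilde{O}(H^2\log|N_\rho|\log|N_\eta|/\epsilon^2)=\tilde{O}(H^4/\epsilon^2)$. The only difference is cosmetic bookkeeping: your pseudo-dimension estimate $H(\log|N_\rho|+\log|N_\eta|)$ via per-iteration configurations is a loose but valid upper bound, just as the paper's $[\log(\nu/LZ)/\log(1-\beta)]\log|N_\rho|\log|N_\eta|$ is, and both land at the same $\tilde{O}(H^4/\epsilon^2)$ sample complexity.
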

By now, we have shown the existence a learning algorithm capable of probabilistically identifying the optimal algorithm with
a sufficiently large sample size by using the proposed new cost function under the learning complexity framework.

\section{Conclusion}
In this study, we employed a learning-complexity framework for data-driven algorithm design. One of our major contributions is to introduce a new cost function based on the sum of distances between current and optimal values for each iteration. We proved that for both gradient descent and conjugate gradient methods, optimal learning algorithms exist that can $(C+\epsilon,\delta)$-learn the optimal algorithm using $m=\tilde{O}(H^3/\epsilon^2)$ and $m=\tilde{O}(H^4/\epsilon^2)$ samples, respectively.

Future researches include: extend this framework to other optimization algorithms, develop new cost functions for better performance, or investigate the scalability for large-scale problems.  
These efforts will enhance the applicability and effectiveness of data-driven algorithm design in solving complex optimization challenges. In recent years, researchers have predominantly focused on the worst-case complexity of algorithms, emphasizing the derivation of upper bounds. In contrast, lower bounds on complexity are often applicable only to specific problems. We believe that exploring this area further presents an intriguing avenue for future research.
\bibliographystyle{aaai25}
\nocite{*}
\bibliography{aaai25}
\newpage

\appendix

\section{Notation Table}
\begin{table}[ht]
\centering
\begin{tabular}{|c|l|}
\hline
\textbf{Symbol} & \textbf{Meaning} \\ \hline
$\Pi$ & A fixed computational or optimization problem. \\ \hline
$\mathcal{D}$ & An unknown distribution over instances in $\Pi$. \\ \hline
$\mathcal{A}$ & algorithms for solving $\Pi$.\\ \hline
$c$ & cost function. \\ \hline
$\mathit{A}_\mathcal{D}$ & The 'best' algorithm in $\mathcal{A}$. \\ \hline
$d_\mathcal{H}$ & The pseudo-dimension a class of functions $\mathcal{H}$. \\ \hline
$f$ & A given function to solve by GD/CG method. \\ \hline
$z_0$ & The initial point in GD/CG iterations. \\ \hline
$z^*$ & The optimal solution for minimizing $f$. \\ \hline
$\rho$,$\eta$ & Step sizes in GD/CG method. \\ \hline
$\nu$ & The lower bound of the norm of the initial point $||z_0||$. \\ \hline
$\beta$ & A constant mentioned in Assumption 1.4 and Assumption 2.4\\ \hline
$g^j$ & the result after $j$ iterations starting at some initial point with some step size. \\ \hline
$A_\rho$,$A_{\rho,\eta}$ & The GD (CG) with step size $\rho$ ($\rho$ and $\eta$).\\ \hline
$D(\rho)$ & $\max\{1,L\rho-1\}$ \\ \hline
$\tilde{O}(\cdot)$ & Softened Big-O notation representing upper bound of an algorithm's complexity. \\ \hline
\end{tabular}
\caption{Notation Table}
\end{table}

\section{Details of Assumption 1.4}
Assumption 1.4 (the case of Assumption 2.4 is similar),
which states that {\it ``there exists a constant $\beta \in (0,1)$ such that 
$$||z-\rho \nabla f(z)|| \leq (1-\beta)||z|| \text{ for all } \rho \in [\rho_l, \rho_u]\text{''.}\qquad (1)$$} 
is a condition for the selection of proper step size such that the algorithm provide a sufficient decrease condition at each step. It plays a similar role to the traditional Armijo condition or Wolfe condition \citet{w:69}, but in a form more suitable for learning complexity analysis.

{\bf Motivation of the condition:}
At each step of the algorithm, $z$ in the condition stands for the current iteration point $z_t$ at step $t$ and $z-\rho \nabla f(z)$ is the next iteration point $z_{t+1}$ under the direction $\nabla f(z)$ and selected $\rho$. 
As we have assumed that the optimal solution $z^*=0$ (this assumption can be made by simply adding a constant to the decision variable and does not effect the iteration process), thus (1) essentially means $$||z_{k+1}-z^*|| \leq (1-\beta)||z_k-z^*||, \text{ for some } \beta\in[0,1].$$

{\bf Strongly convex problem case:}
The condition is not very strong to satisfy when we apply GD or CG algorithms in practical optimization problems.
For GD algorithm as an example, we consider the optimization problems when the objective function $f$ is $m$-strongly convex, which states that for any points $z_1$, $z_2 \in \mathbb{R}^n$, 
$$
f(z_2) + \nabla f(z_2)^\top(z_1-z_2) +\frac{m}{2}||z_1-z_2||^2 \leq f(z_1).$$
When $m = 0$, it reverts to the classical convexity.
We define $\kappa=\frac{L}{m}$, where $L$ is the Lipschitz modules of $f$ as defined in Assumption 1.1, and set the step size as $\rho = \frac{2}{m+L}$. With any starting point $z_0 \in \mathbb{R}^n$, we have
$$
\begin{aligned}
||z_{t+1}-z^*||^2 &= ||z_{t}-\rho \nabla f(z_t)-z^*||^2\\
&= ||z_{t}-z^*||^2+\rho^2||\nabla f(z_t)||^2-2\rho\nabla f(z_t)^\top(z_{t}-z^*) \\
&\leq \left(1-2\frac{\rho mL}{m+L}\right)||z_{t}-z^*||^2 + (\rho^2-2\frac{\rho}{m+L})||\nabla f(z_t)||^2 \\
&= \left(\frac{\kappa-1}{\kappa+1}\right)^2||z_{t}-z^*||^2.
\end{aligned}
$$
The inequality presented above is by Lemma 3.11 in \citet{b:15}. 
We can then observe that in this case, for all parameter $\beta \in (0,1-\frac{\kappa-1}{\kappa+1}]$, 
there exists at least a step size $\rho:=\frac{2}{m+L}$ satisfying (1).
Thus, we can find that the set of step sizes under Assumption 1.4 is non-empty in the $L$-smooth and $m$-strongly convex optimization problems.

Given $\beta \leq 1-\frac{\kappa-1}{\kappa+1}$, this condition (1)  provides a constraint for selecting an appropriate step size $\rho$. 
When $\beta$ is close to 0, this assumption reduces to the decrease condition and provides a wide range for the selection of step size.
When $\beta$ is larger, the algorithm has a better performance guarantee.

\section{Proof of Theorem 4}
\begin{proof}
Let's assume that $M_\rho$, the number of iterations for algorithm $A_\rho$ is larger than $M_\eta$, and we denote $M_\rho$ by $M$. By Lemma 1, $||g^j(z,\rho)-g^j(z,\eta)|| \leq (\eta-\rho)\frac{D(\rho)^j LZ}{\beta}$. Therefore, we have
\begin{displaymath}
\begin{aligned}
&|c(A_\rho,x)-c(A_\eta,x)|\\
=& |\sum_{i=1}^{M}||z^{*}-g^i(z_0,\rho)||-\sum_{i=1}^{M}||z^{*}-g^i(z_0,\eta)|||\\
=&\sum_{i=1}^{M}||z^{*}-g^i(z_0,\rho)||-\sum_{i=1}^{M}||z^{*}-g^i(z_0,\eta)||\\
=&\sum_{i=1}^{M}[||z^{*}-g^i(z_0,\rho)||-||z^{*}-g^i(z_0,\eta)||]\\
\leq& \sum_{i=1}^{M}[||g^i(z_0,\rho)||-||g^i(z_0,\eta)||]\\
\leq& \sum_{i=1}^{M}[||g^i(z_0,\rho)-g^i(z_0,\eta)||]\\
\leq& \sum_{i=1}^{M}[(\eta-\rho)\frac{D(\rho)^j LZ}{\beta}]\\
=& (\eta-\rho)\frac{LZ}{\beta}\frac{1-D(\rho)^M}{1-D(\rho)}D(\rho).
\end{aligned}
\end{displaymath}
Recall that $M \leq H$, plugging in the bound in assumption gives the desired result.\\
\end{proof}

\section{Proof of Theorem 5}
\begin{proof}
    We have known that the pseudo-dimension of $\mathcal{A}_N=\{\mathit{A}_\rho, \rho \in N\}$ is at most $[\log(\nu/LZ)/\log(1-\beta)]\log|N|$ in Section 3.2. Since $\mathcal{A}_N$ is finite and $\log|N|\sim \tilde{O}(H)$, it also trivially admits an ERM algorithm $L_N$, which $(\epsilon,\delta)$-learns the optimal algorithm in $\mathcal{A}_N$ using $m=\tilde{O}(H^3/\epsilon^2)$ samples.
\end{proof}

\section{Proof of Lemma 3}
\begin{proof} By Lemma 3.16 in  Gupta and Roughgarden (2016) and triangle inequality, we have that 
\begin{displaymath}
\begin{aligned}
&||g(w_n,w_{n-1},\rho,\eta)-g(y_n,y_{n-1},\rho,\eta)||\\
=&||[w_n-\rho\nabla f(w_n)-\eta(w_n-w_{n-1})]-[y_n-\rho\nabla f(y_n)-\eta(y_n-y_{n-1})]||\\
=&||(w_n-y_n)-\rho(\nabla f(w_n)-\nabla f(y_n))-\eta[(w_n-w_{n-1})-(y_n-y_{n-1})]||\\
\leq& D(\rho)||w_n-y_n||+\eta||(w_n-y_n)-(w_{n-1}-y_{n-1})||\\
\leq& [D(\rho)+\eta]||w_{n}-y_{n}||+\eta||w_{n-1}-y_{n-1}||.
\end{aligned}
\end{displaymath}
\end{proof}

\section{Proof of Corollary 2}
\begin{proof}
By Lemma 3,
\begin{displaymath}
\begin{aligned}
&||g(w_n,w_{n-1},\rho,\eta)-g(y_n,y_{n-1},\rho,\eta)|| \leq [D(\rho)+\eta]||w_{n}-y_{n}||+\eta||w_{n-1}-y_{n-1}||, 
\end{aligned}
\end{displaymath}
i.e. 
\begin{displaymath}
\begin{aligned}
||w_{n+1}-y_{n+1}|| \leq [D(\rho)+\eta]||w_{n}-y_{n}||+\eta||w_{n-1}-y_{n-1}||.
\end{aligned}
\end{displaymath}
Let $A_n=||w_{n}-y_{n}||$, we have
\begin{displaymath}
A_n = [D(\rho)+\eta]A_{n-1}+A_{n-2}.
\end{displaymath}
Therefore, by Lemma 1, 
\begin{displaymath}
A_n = c_1r_1^n+c_2r_2^n,
\end{displaymath}
where $r_1$ and $r_2$ are the roots of characteristic equation of second order homogeneous linear recurrence relation $A_n = [D(\rho)+\eta]A_{n-1}+\eta A_{n-2}$. Now consider the coefficients $c_1$ and $c_2$. Given that the initial value satisfies $A_0=||w_{0}-y_{0}||$, and $A_1=||w_{1}-y_{1}||\leq D(\rho)||w_{0}-y_{0}||$ (According to Lemma 3.16 in  Gupta and Roughgarden (2016)), and\\
\begin{displaymath}
\centering
\left \{
\begin{aligned}
A_0 &= c_1+c_2 \\
A_1 &= c_1r_1+c_2r_2. \\
\end{aligned}
\right.
\end{displaymath}
We solve the equations above to get
\begin{displaymath}
\centering
\left \{
\begin{aligned}
c_1 &= \frac{D(\rho)-r_2}{r_1-r_2}||w_{0}-y_{0}|| \\
c_2 &= \frac{D(\rho)-r_1}{r_2-r_1}||w_{0}-y_{0}|| \\
\end{aligned}
\right.
\end{displaymath}
which gives the corollary.
\end{proof}

\section{Proof of Lemma 4}

\begin{displaymath}
\begin{aligned}
&R_0 = \frac{LZ(1-\beta)^2}{[D(\rho_1)+\eta](1-\beta)+\eta-(1-\beta)^2},\\
&R_1 = \frac{D(\rho_1)LZ}{\beta}+\frac{LZ(1-\beta)^3}{[D(\rho_1)+\eta](1-\beta)+\eta-(1-\beta)^2}.\\
\end{aligned}
\end{displaymath}

\begin{proof} By Lemma 3, we have that 
\begin{displaymath}
\begin{aligned}
&||g^j(z_1,z_0,\rho_1,\eta)-g^j(z^{\prime}_1,z_0,\rho_2,\eta)||\\
=&||g(z_j,z_{j-1},\rho_1,\eta)-g(z^{\prime}_j,z^{\prime}_{j-1},\rho_2,\eta)||\\
=&||[z_{j}-\rho_1 \nabla f(z_{j})-\eta(z_{j}-z_{j-1})]-[z^{\prime}_j-\rho_2 \nabla f(z^{\prime}_j)-\eta(z^{\prime}_j-z^{\prime}_{j-1})]||\\
=&||[z_{j}-\rho_1 \nabla f(z_{j})-\eta(z_{j}-z_{j-1})]-[z^{\prime}_j-\rho_1 \nabla f(z^{\prime}_j)-\eta(z^{\prime}_j-z^{\prime}_{j-1})]+[(\rho_2-\rho_1)\nabla f(z^{\prime}_j)]||\\
\leq& [D(\rho_1)+\eta]||z_{j}-z^{\prime}_j||+\eta||z_{j-1}-z^{\prime}_{j-1}||+(\rho_2-\rho_1)||\nabla f(z^{\prime}_j)]||.\\
\end{aligned}
\end{displaymath}
By condition 1) in Assumption 2, $||\nabla f(z^{\prime}_j)]|| \leq L||z^{\prime}_j||\leq LZ(1-\beta)^j$. Then we have
\begin{displaymath}
\begin{aligned}
&||g^j(z_1,z_0,\rho_1,\eta)-g^j(z^{\prime}_1,z_0,\rho_2,\eta)||
\leq [D(\rho_1)+\eta]||z_{j}-z^{\prime}_j||+\eta||z_{j-1}-z^{\prime}_{j-1}||+(\rho_2-\rho_1)LZ(1-\beta)^j.\\
\end{aligned}
\end{displaymath}
Let $A_j = ||z_{j}-z^{\prime}_j||$, $b=D(\rho_1)+\eta$, $c=\eta$, $d=(\rho_2-\rho_1)LZ$ and $\alpha=1-\beta$, we have the following recurrence relation:
\begin{displaymath}
A_j = bA_{j-1}+cA_{j-2}+d\alpha^j.
\end{displaymath}
If $j\geq2$, consider $X_j=A_j+k\alpha^j$ subject to
\begin{displaymath}
X_j = bX_{j-1}+cX_{j-2}.
\end{displaymath}
Then
\begin{displaymath}
\centering
\begin{aligned}
A_j+k\alpha^j&=b[A_{j-1}+k\alpha^{j-1}]+c[A_{j-2}+k\alpha^{j-2}],\\
(d+k)\alpha^j&=bk\alpha^{j-1}+ck\alpha^{j-2},\\
k&=\frac{d\alpha^2}{b\alpha+c-\alpha^2}.\\
\end{aligned}
\end{displaymath}
Therefore by Lemma 2,
\begin{displaymath}
X_j=c_1r_1^j+c_2r_2^j,
\end{displaymath}
where $r_1$ and $r_2$ are the roots of characteristic equation of second order homogeneous linear recurrence relation $A_n = [D(\rho_1)+\eta]A_{n-1}+\eta A_{n-2}$. Now consider the coefficients $c_1$ and $c_2$.\\


We know that $A_0=||z_{0}-z_{0}||=0$, $A_1=||z_{1}-z^{\prime}_{1}||\leq (\rho_2-\rho_1)\frac{D(\rho_1)LZ}{\beta}$, and
\begin{displaymath}
\centering
\left \{
\begin{aligned}
X_0 &= A_0+k \\
X_1 &= A_1+k\alpha. 
\end{aligned}
\right.
\end{displaymath} 
Therefore, plugging $b=D(\rho_1)+\eta$, $c=\eta$, $d=(\rho_2-\rho_1)LZ$, $\alpha=1-\beta$, $k=\frac{d\alpha^2}{b\alpha+c-\alpha^2}$, $A_0=0$ and $A_1\leq (\rho_2-\rho_1)\frac{D(\rho_1)LZ}{\beta}$, we have
\begin{displaymath}
\begin{aligned}
X_0 &=  (\rho_2-\rho_1)\frac{LZ(1-\beta)^2}{[D(\rho_1)+\eta](1-\beta)+\eta-(1-\beta)^2}\\
&=R_0(\rho_2-\rho_1),\\
X_1 &\leq  (\rho_2-\rho_1)\frac{D(\rho_1)LZ}{\beta}+\frac{(\rho_2-\rho_1)LZ(1-\beta)^3}{[D(\rho_1)+\eta](1-\beta)+\eta-(1-\beta)^2}\\
& = R_1(\rho_2-\rho_1).
\end{aligned}
\end{displaymath}
Finally consider $c_1$ and $c_2$. We have
\begin{displaymath}
\centering
\left \{
\begin{aligned}
R_0(\rho_2-\rho_1) &= c_1+c_2 \\
R_1(\rho_2-\rho_1) &= c_1r_1+c_2r_2 .\\
\end{aligned}
\right.
\end{displaymath}
We solve the equations above and get
\begin{displaymath}
\centering
\left \{
\begin{aligned}
c_1 &= \frac{R_0r_2-R_1}{r_1-r_2}(\rho_2-\rho_1) \\
c_2 &= \frac{R_0r_1-R_1}{r_2-r_1}(\rho_2-\rho_1). \\
\end{aligned}
\right.
\end{displaymath}
Combining all above conclusions, we have
\begin{displaymath}
\begin{aligned}
A_j =& X_j-k\alpha^j\\
\leq & (\rho_2-\rho_1)[(\frac{R_0r_2-R_1}{r_2-r_1}r_1^j+\frac{R_0r_1-R_1}{r_1-r_2}r_2^j)-\frac{LZ(1-\beta)^{j+2}}{[D(\rho_1)+\eta](1-\beta)+\eta-(1-\beta)^2}]\\
 =& (\rho_2-\rho_1)G(\rho_1,\eta,j)\\
\end{aligned}
\end{displaymath}
where $\beta$, $L$ and $Z$ are constants.
\end{proof}

\section{Full Version and Proof of Lemma 5}
\setcounter{lemma}{4}
\begin{lemma}[Iteration error estimation fixing the step size $\rho$]
For any initial point $z_0\in \mathbb{R}^N$, iteration step $j \geq 2$, step sizes $\rho \in \mathbb{R}$, $\eta_1\in \mathbb{R}$, $\eta_2\in \mathbb{R}$ such that $\eta_1 \leq \eta_2$, we have  
\begin{displaymath}
\begin{aligned}
&||g^j(z_1,z_0,\rho,\eta_1)-g^j(z^{\prime}_1,z_0,\rho,\eta_2)||\\ 
\leq& (\eta_2-\eta_1)\bigg[(\frac{R_0r_2-R_1}{r_2-r_1}r_1^j+\frac{R_0r_1-R_1}{r_1-r_2}r_2^j)\\
&\quad -\frac{F(\eta^*)}{D(\rho)+2\eta_1-1}\bigg]\\
:=& (\eta_2-\eta_1)H(\eta^*,\rho,j),
\end{aligned}
\end{displaymath}
where $r_1$ and $r_2$ are the roots of the characteristic equation of second order homogeneous linear recurrence relation $A_n = [D(\rho)+\eta_1]A_{n-1}+\eta_1 A_{n-2}$,  
\begin{displaymath}
\begin{aligned}
R_0 = & \frac{F(\eta^*)}{D(\rho)+2\eta_1-1},\\
R_1 = & F(\eta^*)\frac{1}{\eta_1}\left[1+\frac{1}{D(\rho)+2\eta_1-1}\right],\\
F(\eta^*)=& \mathop{\rm{max}}_{
        n \in\{0,1,\cdots,j-1\}
        \atop
        \eta \in [\eta_l,\eta_u]} \eta^{n}\left[\rho LZ+\frac{\rho Z}{\eta-L}L\right]-\frac{\rho Z}{\eta-L}L^{n+1}.
\end{aligned}
\end{displaymath}
\end{lemma}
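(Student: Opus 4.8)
The plan is to mirror the argument used for Lemma 4, but with the roles of the two perturbation mechanisms interchanged. Since the step size $\rho$ is now held fixed and only the conjugate parameter changes from $\eta_1$ to $\eta_2$, I would first expand $g^j(z_1,z_0,\rho,\eta_1)-g^j(z'_1,z_0,\rho,\eta_2)$ at the $j$-th step, add and subtract the term $\eta_1(z'_j-z'_{j-1})$, and apply the Lipschitz estimate of Lemma 3. Writing $A_j=\|z_j-z'_j\|$, this yields the second-order inhomogeneous recurrence $A_{j+1}\le [D(\rho)+\eta_1]A_j+\eta_1 A_{j-1}+(\eta_2-\eta_1)\|z'_j-z'_{j-1}\|$. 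The forcing term is now the consecutive-iterate gap $\|z'_j-z'_{j-1}\|$ of the perturbed process, in contrast to the gradient norm $\|\nabla f(z'_j)\|$ that appeared in Lemma 4.

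The main obstacle is bounding this consecutive gap uniformly, since, unlike the gradient term in Lemma 4 which decayed cleanly as $LZ(1-\beta)^j$, it does not reduce to a single decaying geometric term and must be controlled through its own recurrence. I would write the one-step recursion $z'_{j+1}-z'_j=-\rho\nabla f(z'_j)-\eta_2(z'_j-z'_{j-1})$ and couple it, through $L$-smoothness, with the companion relation $\|\nabla f(z'_{j})\|\le \|\nabla f(z'_{j-1})\|+L\|z'_j-z'_{j-1}\|$. Solving this auxiliary system produces a bound driven by two distinct geometric modes, $\eta$ and $L$, which after taking the maximum over the step index $n\in\{0,\dots,j-1\}$ and over $\eta\in[\eta_l,\eta_u]$ gives exactly the quantity $F(\eta^*)$; this is the step I expect to require the most care, and the $\tfrac{1}{\eta-L}$ and $L^{n+1}$ terms in $F(\eta^*)$ are the fingerprints of the two competing modes.

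Once $\|z'_j-z'_{j-1}\|\le F(\eta^*)$ is in hand, the forcing term is bounded by the constant $(\eta_2-\eta_1)F(\eta^*)$, so the remaining work is a routine solution of a constant-forced recurrence. I would find the constant particular solution $A^*$ of $A^*=[D(\rho)+\eta_1]A^*+\eta_1A^*+(\eta_2-\eta_1)F(\eta^*)$, namely $A^*=-(\eta_2-\eta_1)\,\tfrac{F(\eta^*)}{D(\rho)+2\eta_1-1}=-(\eta_2-\eta_1)R_0$, then homogenize by setting $X_j=A_j-A^*$ and invoke Lemma 2 to write $X_j=c_1r_1^j+c_2r_2^j$, where $r_1,r_2$ solve the characteristic equation of $A_n=[D(\rho)+\eta_1]A_{n-1}+\eta_1A_{n-2}$. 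Determining $c_1,c_2$ from the homogenized initial conditions, with $R_1$ absorbing an upper bound on the first nonvanishing gap, and substituting back $A_j=X_j+A^*$ gives the closed form $(\eta_2-\eta_1)H(\eta^*,\rho,j)$, in which the subtracted constant is precisely $\tfrac{F(\eta^*)}{D(\rho)+2\eta_1-1}$. This is the verbatim analogue of the final computation in Lemma 4, with $R_0$ and $R_1$ now expressed through $F(\eta^*)$.
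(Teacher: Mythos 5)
Your skeleton matches the paper's proof almost step for step: the same perturbation decomposition isolating $(\eta_2-\eta_1)(z'_j-z'_{j-1})$ via Lemma 3, the same reduction to the constant-forced second-order recurrence $B_j \le [D(\rho)+\eta_1]B_{j-1}+\eta_1 B_{j-2}+(\eta_2-\eta_1)F(\eta^*)$, the same homogenization (your constant particular solution $A^* = -(\eta_2-\eta_1)R_0$ is exactly the paper's shift $X_j = B_j + k$ with $k = d'/(b'+c'-1)$), Lemma 2, and the determination of $c_1,c_2$ from initial data (where, as the paper notes, $B_1 = \|z_1-z'_1\| = 0$ because the first CG step does not involve $\eta$).

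The genuine gap is in the one step you flag as delicate: your proposed mechanism for producing $F(\eta^*)$ would not deliver the stated closed form. You couple the consecutive gap $A_n = \|z'_{n+1}-z'_n\|$ to the gradient through the smoothness relation $\|\nabla f(z'_j)\| \le \|\nabla f(z'_{j-1})\| + L\|z'_j-z'_{j-1}\|$ and propose to solve the resulting system. But substituting that relation into $A_j \le \rho\|\nabla f(z'_j)\| + \eta_2 A_{j-1}$ gives a coupled linear system whose growth modes are the roots of $\lambda^2 - (1+\eta_2+\rho L)\lambda + \eta_2 = 0$ --- neither of which is $\eta$ or $L$. Worse, the companion inequality carries coefficient $1$ on $\|\nabla f(z'_{j-1})\|$, so within that system the gradient estimate never decays along the iteration: the bound acquires a mode at least $1$ and cannot reproduce the decaying term $-\frac{\rho Z}{\eta-L}L^{n+1}$ or the resonance denominator $\frac{1}{\eta-L}$ in $F(\eta^*)$. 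So your assertion that this auxiliary system ``gives exactly the quantity $F(\eta^*)$'' is false as stated. What the paper actually does is decouple: by Assumption 2 the iterates themselves contract, and smoothness then gives an a priori geometric bound on the forcing term, $\rho\|\nabla f(z'_{j-1})\| \le \rho L^{j-1}Z$ in the paper's notation, with $A_0 = \rho\|\nabla f(z_0)\| \le \rho LZ$. This turns the gap estimate into the \emph{first-order} recurrence $A_n \le \eta_2 A_{n-1} + \rho Z L^n$ with geometric forcing, solved exactly by the substitution $X_n = A_n + kL^n$ with $k = \frac{\rho Z L}{\eta_2 - L}$; it is this substitution, not a two-dimensional coupled system, that produces the two modes $\eta^n$ and $L^{n+1}$ and the $\frac{1}{\eta-L}$ factor, and maximizing the exact solution over $n \in \{0,1,\dots,j-1\}$ and $\eta \in [\eta_l,\eta_u]$ yields $F(\eta^*)$. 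With this replacement in your second step, the remainder of your argument goes through exactly as in the paper.
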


\begin{proof} By Lemma 3, 
\begin{displaymath}
\begin{aligned}
&||g^j(z_1,z_0,\rho,\eta_1)-g^j(z^{\prime}_1,z_0,\rho,\eta_2)||\\
=&||g(z_j,z_{j-1},\rho,\eta_1)-g(z^{\prime}_j,z^{\prime}_{j-1},\rho,\eta_2)||\\
=&||[z_{j}-\rho \nabla f(z_{j})-\eta_1(z_{j}-z_{j-1})]-[z^{\prime}_j-\rho \nabla f(z^{\prime}_j)-\eta_2(z^{\prime}_j-z^{\prime}_{j-1})]||\\
=&||[z_{j}-\rho \nabla f(z_{j})-\eta_1(z_{j}-z_{j-1})]-[z^{\prime}_j-\rho \nabla f(z^{\prime}_j)-\eta_1(z^{\prime}_j-z^{\prime}_{j-1})]+[(\eta_2-\eta_1)(z^{\prime}_j-z^{\prime}_{j-1})||\\
\leq& [D(\rho)+\eta_1]||z_{j}-z^{\prime}_j||+\eta_1||z_{j-1}-z^{\prime}_{j-1}||+(\eta_2-\eta_1)||z^{\prime}_j-z^{\prime}_{j-1}||.\\
\end{aligned}
\end{displaymath}
We first consider $||z^{\prime}_j-z^{\prime}_{j-1}||$. By Assumption 2,\\
\begin{displaymath}
\begin{aligned}
||z^{\prime}_j-z^{\prime}_{j-1}|| &= ||-\rho \nabla f(z^{\prime}_{j-1})-\eta_2(z^{\prime}_{j-1}-z^{\prime}_{j-2})|| \\ 
&\leq \rho ||\nabla f(z^{\prime}_{j-1})||+\eta_2||(z^{\prime}_{j-1}-z^{\prime}_{j-2})||\\
&\leq \rho L^{j-1}Z +\eta_2||(z^{\prime}_{j-1}-z^{\prime}_{j-2})||.\\
\end{aligned}
\end{displaymath}
Let $A_n = ||z_{n+1}-z^{\prime}_{n}||$, $a=\eta_2$, $b=\rho Z$ and $c=L$, we have the following recurrence relation:
\begin{displaymath}
A_n=aA_{n-1}+bc^n. \\
\end{displaymath}
If $n\geq1$, consider $X_j=A_j+kc^n$ subject to
\begin{displaymath}
X_n=aX_{n-1},
\end{displaymath}
we have 
\begin{displaymath}
\centering
\begin{aligned}
A_n+kc^n&=a(A_{n-1}+kc^{n-1}),\\
(b+k)c^n&=akc^{n-1},\\
k&=\frac{bc}{a-c}.\\
\end{aligned}
\end{displaymath}
Note that $\{X_n\}$ is geometric series, we have
\begin{displaymath}
X_n=a^nX_{0}.
\end{displaymath}
Now consider $X_0$. Given that $A_0=||z^{\prime}_{1}-z_{0}||=\rho LZ$, and we also have \\
\begin{displaymath}
\centering
\begin{aligned}
X_1&=A_1+kc\\
&=(bc+aA_0)+kc,\\
X_0&=\frac{1}{a}X_1\\
&=A_0+\frac{(b+k)c}{a}\\
&=\rho LZ+\frac{(\rho Z+\frac{\rho LZ}{\eta_2-L})L}{\eta_2}.
\end{aligned}
\end{displaymath}
Therefore 
\begin{displaymath}
\begin{aligned}
A_n =& X_n-kc^n\\
= & \eta_2^{n}[\rho LZ+\frac{(\rho Z+\frac{\rho LZ}{\eta_2-L})L}{\eta_2}]-\frac{\rho LZ}{\eta_2-L}L^{n}\\
=& \eta_2^{n}[\rho LZ+\frac{\rho Z(\eta_2-L)+\rho LZ}{\eta_2(\eta_2-L)}L]-\frac{\rho Z}{\eta_2-L}L^{n+1}\\
=& \eta_2^{n}[\rho LZ+\frac{\rho Z}{\eta_2-L}L]-\frac{\rho Z}{\eta_2-L}L^{n+1}.\\
\end{aligned}
\end{displaymath}
Then let $B_j = ||z_{j}-z^{\prime}_j||$, $b^{\prime} =D(\rho)+\eta_1$, $c^{\prime} =\eta_1$ and $d=\eta_2-\eta_1$, we have the following recurrence relation:\\
\begin{displaymath}
B_j \leq b^{\prime} B_{j-1}+c^{\prime}B_{j-2}+dA_{j-1}.
\end{displaymath}
For subsequent conclusions, it is beneficial to find a upper bound for $A_{j-1}$. Since $j$ is finite, there must exists a $n^* \in(0,1,\cdots,j-1)$, such that $A_{n^*}$ reaches the maximum value of series $A_n$, where $n \in(0,1,\cdots,j-1)$. Then let
\begin{displaymath}
\begin{aligned}
F(\eta^*)&= \mathop{\rm{max}}_{
        \eta \in [\eta_l,\eta_u]} A_{n^*}(\eta)
        \\&=\mathop{\rm{max}}_{
        n \in(0,1,\cdots,j-1)
        \atop
        \eta \in [\eta_l,\eta_u]} \eta^{n}[\rho LZ+\frac{\rho Z}{\eta-L}L]-\frac{\rho Z}{\eta-L}L^{n+1},
\end{aligned}
\end{displaymath}
we can see that at each iteration step, $A_{j}\leq A_{n^*} \leq F(\eta^*)$. Therefore, 
\begin{displaymath}
B_j \leq b^{\prime} B_{j-1}+c^{\prime}B_{j-2}+dF(\eta^*) := b^{\prime} B_{j-1}+c^{\prime}B_{j-2}+d^{\prime}
\end{displaymath}
by letting $d^{\prime}=dF(\eta^*)$. If $j\geq2$, consider $X_j=B_j+k$ subject to
\begin{displaymath}
X_j=b^{\prime}X_{j-1}+c^{\prime}X_{j-2},
\end{displaymath}
we have 
\begin{displaymath}
\centering
\begin{aligned}
B_j+k&=b^{\prime}[B_{j-1}+k]+c[B_{j-2}+k],\\
d^{\prime}+k&=b^{\prime}k+c^{\prime}k,\\
k&=\frac{d^{\prime}}{b^{\prime}+c^{\prime}-1}.\\
\end{aligned}
\end{displaymath}
Therefore by Lemma 2,
\begin{displaymath}
X_j=c_1r_1^j+c_2r_2^j,
\end{displaymath}
where $r_1$ and $r_2$ are the roots of characteristic equation of second order homogeneous linear recurrence relation $B_n = [D(\rho)+\eta_1]B_{n-1}+\eta_1 B_{n-2}$. Now consider the coefficients $c_1$ and $c_2$. Then consider the initial values $X_1$ and $X_0$.
Given that $B_0=||z_{0}-z_{0}||=0$, $B_1=||z_{1}-z^{\prime}_{1}||= ||[z_{0}-\rho \nabla f(z_0)]-[z_{0}-\rho \nabla f(z_0)]||$, and we also have
\begin{displaymath}
\centering
\left \{
\begin{aligned}
B_2 &= b^{\prime}B_1+c^{\prime}A_0+d^{\prime} \\
B_3 &= b^{\prime}B_2+c^{\prime}A_1+d^{\prime}, \\
\end{aligned}
\right.
\end{displaymath}
we have
\begin{displaymath}
\centering
\left \{
\begin{aligned}
B_2+k &= b^{\prime}X_1+c^{\prime}X_0 \\
B_3+k &= b^{\prime}X_2+c^{\prime}X_1. \\
\end{aligned}
\right.
\end{displaymath}
Solving the equation above, we have
\begin{displaymath}
\centering
\left \{
\begin{aligned}
X_1 &= \frac{1}{c^{\prime}}[d^{\prime}-(b^{\prime}-1)k] \\
X_0 &= \frac{1}{c^{\prime}}[(d^{\prime}+k)(1-\frac{b^{\prime}}{c^{\prime}})+\frac{b^{'2}}{c^{\prime}}k]. \\
\end{aligned}
\right.
\end{displaymath}
Plugging $b^{\prime}=D(\rho)+\eta_1$, $c^{\prime}=\eta_1$, $d^{\prime}=(\eta_2-\eta_1)F(\eta^*)$ and $k=\frac{d^{\prime}}{b^{\prime}+c^{\prime}-1}$, we have
\begin{displaymath}
\begin{aligned}
X_0 &=  \frac{1}{\eta_1}[[(\eta_2-\eta_1)F(\eta^*)+\frac{(\eta_2-\eta_1)F(\eta^*)}{D(\rho)+2\eta_1-1}](1-\frac{D(\rho)+\eta_1}{\eta_1})+\frac{[D(\rho)+\eta_1]^2}{\eta_1}\frac{(\eta_2-\eta_1)F(\eta^*)}{D(\rho)+2\eta_1-1}]\\
&=(\eta_2-\eta_1)\frac{F(\eta^*)}{\eta_1}[-(1+\frac{1}{D(\rho)+2\eta_1-1})\frac{D(\rho)}{\eta_1}+\frac{[D(\rho)+\eta_1]^2}{\eta_1(D(\rho)+2\eta_1-1)}]\\
&=(\eta_2-\eta_1)\frac{F(\eta^*)}{\eta_1}[\frac{D(\rho)(D(\rho)+2\eta_1)}{\eta_1(D(\rho)+2\eta_1-1)}+\frac{[D(\rho)+\eta_1]^2}{\eta_1(D(\rho)+2\eta_1-1)}]\\
&=(\eta_2-\eta_1)\frac{F(\eta^*)}{\eta_1}[\frac{\eta_1^2}{\eta_1(D(\rho)+2\eta_1-1)}]\\
&=(\eta_2-\eta_1)\frac{F(\eta^*)}{D(\rho)+2\eta_1-1}\\
&=R_0(\eta_2-\eta_1),\\
X_1 &\leq  \frac{1}{\eta_1}[(\eta_2-\eta_1)F(\eta^*)-[D(\rho)+\eta_1-1]\frac{(\eta_2-\eta_1)F(\eta^*)}{D(\rho)+2\eta_1-1}]\\
& = (\eta_2-\eta_1)\frac{F(\eta^*)}{\eta_1}[1-[D(\rho)+\eta_1-1]\frac{1}{D(\rho)+2\eta_1-1}]\\
& = (\eta_2-\eta_1)\frac{F(\eta^*)}{\eta_1}\frac{\eta_1}{D(\rho)+2\eta_1-1}\\
& = R_1(\eta_2-\eta_1).
\end{aligned}
\end{displaymath}

Finally, we consider $c_1$ and $c_2$. Specifically, we have
\begin{displaymath}
\centering
\left \{
\begin{aligned}
R_0(\eta_2-\eta_1) &= c_1+c_2 \\
R_1(\eta_2-\eta_1) &= c_1r_1+c_2r_2. \\
\end{aligned}
\right.
\end{displaymath}
We solve the equations above to obtain
\begin{displaymath}
\centering
\left \{
\begin{aligned}
c_1 &= \frac{R_0r_2-R_1}{r_2-r_1}(\eta_2-\eta_1) \\
c_2 &= \frac{R_0r_1-R_1}{r_1-r_2}(\eta_2-\eta_1). \\
\end{aligned}
\right.
\end{displaymath}
Combining all the above equations and inequalities, we have
\begin{displaymath}
\begin{aligned}
A_j =& X_j-k\\
\leq & (\eta_2-\eta_1)[\frac{R_0r_2-R_1}{r_2-r_1}r_1^j+\frac{R_0r_1-R_1}{r_1-r_2}r_2^j)-\frac{F(\eta^*)}{D(\rho)+2\eta_1-1}]\\
 =& (\eta_2-\eta_1)H(\eta^*,\rho,j)\\
\end{aligned}
\end{displaymath}
where $L$ and $Z$ are constants.
\end{proof}

\section{Proof of Lemma 6}
\begin{proof} By Lemma 3, 
\begin{displaymath}
\begin{aligned}
&||g^j(z_1,z_0,\rho_1,\eta_1)-g^j(z^{\prime}_1,z_0,\rho_2,\eta_2)||\\
=&||g(z_j,z_{j-1},\rho_1,\eta_1)-g(z^{\prime}_j,z^{\prime}_{j-1},\rho_2,\eta_2)||\\
=&||[z_{j}-\rho_1 \nabla f(z_{j})-\eta_1(z_{j}-z_{j-1})]-[z^{\prime}_j-\rho_2 \nabla f(z^{\prime}_j)-\eta_2(z^{\prime}_j-z^{\prime}_{j-1})]||\\
=&||[z_{j}-\rho_1 \nabla f(z_{j})-\eta_1(z_{j}-z_{j-1})]-[z^{\prime}_j-\rho_1 \nabla f(z^{\prime}_j)-\eta_1(z^{\prime}_j-z^{\prime}_{j-1})]\\&+[(\rho_2-\rho_1)\nabla f(z^{\prime}_j)]+[(\eta_2-\eta_1)(z^{\prime}_j-z^{\prime}_{j-1})||\\
\leq &||[z_{j}-\rho_1 \nabla f(z_{j})-\eta_1(z_{j}-z_{j-1})]-[z^{\prime}_j-\rho_1 \nabla f(z^{\prime}_j)-\eta_1(z^{\prime}_j-z^{\prime}_{j-1})]||\\&+|\rho_2-\rho_1|||\nabla f(z^{\prime}_j)||+|\eta_2-\eta_1|||z^{\prime}_j-z^{\prime}_{j-1}||\\
\leq &[||[z_{j}-\rho_1 \nabla f(z_{j})-\eta_1(z_{j}-z_{j-1})]-[z^{\prime}_j-\rho_1 \nabla f(z^{\prime}_j)-\eta_1(z^{\prime}_j-z^{\prime}_{j-1})]||  + |\rho_2-\rho_1|||\nabla f(z^{\prime}_j)||]\\&+[||[z_{j}-\rho_1 \nabla f(z_{j})-\eta_1(z_{j}-z_{j-1})]-[z^{\prime}_j-\rho_1 \nabla f(z^{\prime}_j)  -\eta_1(z^{\prime}_j-z^{\prime}_{j-1})]||+|\eta_2-\eta_1|||z^{\prime}_j-z^{\prime}_{j-1}||]\\
\leq &|\rho_2-\rho_1|G(\rho_1,\eta_1,j)+|\eta_2-\eta_1|H(\eta^*,\rho_1,j)
\end{aligned}
\end{displaymath}
\end{proof}

\section{Proof of Theorem 7}
\begin{proof}
Let's assume that $M_{\rho_1,\eta_1}$, the number of iteration time for $A_{\rho_1,\eta_1}$ is larger than $M_{\rho_2,\eta_2} $, and denote $M_{\rho_1,\eta_1}$ by $M$. By Lemma 6, $||g^j(z_1,z_0,\rho_1,\eta_1)-g^j(z^{\prime}_1,z_0,\rho_2,\eta_2)|| \leq  |\rho_2-\rho_1|G(\rho_1,\eta_1,j)+|\eta_2-\eta_1|H(\eta^*,\rho_1,j)$, therefore,\\
\begin{displaymath}
\begin{aligned}
&||c(A_{\rho_1,\eta_1} ,x)-c(A_{\rho_2,\eta_2} ,x)| 
\\
=& |\sum_{j=0}^{M}||z^{*}-g^j(z_1,z_0,\rho_1,\eta_1)||-\sum_{j=0}^{M}||z^{*}-g^j(z^{\prime}_1,z_0,\rho_2,\eta_2)|||\\
\leq&\sum_{j=0}^{M}[||g^j(z_1,z_0,\rho_1,\eta_1)||-||g^j(z^{\prime}_1,z_0,\rho_2,\eta_2)||]\\
\leq& \sum_{j=0}^{M}[||g^j(z_1,z_0,\rho_1,\eta_1)-g^j(z^{\prime}_1,z_0,\rho_2,\eta_2)||]\\
=& ||z_1-z^{\prime}_1||+\sum_{j=1}^{M}[||g^j(z_1,z_0,\rho_1,\eta_1)-g^j(z^{\prime}_1,z_0,\rho_2,\eta_2)||]\\
\leq& |\rho_2-\rho_1|\frac{LZD(\rho_1)}{\beta}+\sum_{j=1}^{M}[|\rho_2-\rho_1|G(\rho_1,\eta_1,j)+|\eta_2-\eta_1|H(\eta^*,\rho_1,j)]\\
=& |\rho_2-\rho_1|\frac{LZD(\rho_1)}{\beta}+|\rho_2-\rho_1|\sum_{j=1}^{M}G(\rho_1,\eta_1,j)+|\eta_2-\eta_1|\sum_{j=1}^{M}H(\eta^*,\rho_1,j).
\end{aligned}
\end{displaymath}
Recall that $M \leq H$, plugging in the bound in Assumption 2 gives the desired result.
\end{proof}
\section{Proof of Lemma 7}
\begin{proof}
By Lemma 4, for any $j\geq 2$,
\begin{displaymath}
\begin{aligned}
&G(\rho_1,\eta_1,j) = (\frac{R_0r_2-R_1}{r_2-r_1}r_1^j+\frac{R_0r_1-R_1}{r_1-r_2}r_2^j)
-\frac{LZ(1-\beta)^{j+2}}{[D(\rho_1)+\eta_1](1-\beta)+\eta_1-(1-\beta)^2}.
\end{aligned}
\end{displaymath}
Note that $\rho_1$ and $\eta_1$ are constants, $G(\rho_1,\eta_1,j)$ can be rewritten as
\begin{displaymath}
G(\rho_1,\eta_1,j)=c_1r_1^j+c_2r_2^j-\frac{c_3^{j+2}}{c_4},
\end{displaymath}
where $c_1 = \frac{R_0r_2-R_1}{r_2-r_1}$, $c_2 = \frac{R_0r_1-R_1}{r_1-r_2}$, $c_3 = LZ(1-\beta)$ and $c_4 = [D(\rho_1)+\eta_1](1-\beta)+\eta_1-(1-\beta)^2$.\\
We first consider $c_3$ and $c_4$. By definition, $\beta \in (0,1)$, it is easily to see that $c_3>0$ and $c_4>0$. Therefore
\begin{displaymath}
G(\rho_1,\eta_1,j)<c_1r_1^j+c_2r_2^j.
\end{displaymath}
Now consider $c_1$. The denominator $r_1-r_2<0$, for the numerator, note that
\begin{displaymath}
\begin{aligned}
R_0 = &\frac{LZ(1-\beta)^2}{[D(\rho_1)+\eta_1](1-\beta)+\eta_1-(1-\beta)^2}\\
R_1 = &\frac{D(\rho_1)LZ}{\beta}+\frac{LZ(1-\beta)^3}{[D(\rho_1)+\eta_1](1-\beta)+\eta_1-(1-\beta)^2}\\
\end{aligned}
\end{displaymath}
and
\begin{displaymath}
\begin{aligned}
&[D(\rho_1)+\eta_1](1-\beta)+\eta_1-(1-\beta)^2 \\\geq & [D(\rho_1)+\eta_1](1-\beta)-(1-\beta)^2\\
= &(1-\beta)[D(\rho_1)+\eta_1-(1-\beta)]\\
\geq & 0,\\
\end{aligned}
\end{displaymath}
where the last line follows from $0<1-\beta<1$, $D(\rho_1)>1$ and $\eta_1>0$. Therefore $R_0>0$ and $R_1>0$, and the numerator of $c_1$, $R_0r_2-R_1<0$. Consequently, $c_1>0$.
Then consider $r_1$ and $r_2$. By Vieta's theorem
\begin{displaymath}
\begin{aligned}
&r_1+r_2 = D(\rho_1)+\eta_1,\\
&r_1r_2 = -\eta_1.\\
\end{aligned}
\end{displaymath}
By the definition of $D(\rho)$, $r_1>0$, $r_2<0$ and $r_1+r_2>1$. Therefore $r_1>1$ and $r_1>|r_2|$.\\
Therefore
\begin{displaymath}
\begin{aligned}
G(\rho_1,\eta_1,j)< &c_1r_1^j+c_2r_2^j\\
< &c_1r_1^j+|c_2r_2^j|\\
= &c_1r_1^j+|c_2||r_2|^j\\
< &c_1r_1^j+|c_2||r_1|^j\\
=&(c_1+|c_2|)r_1^j\\
=&R^*r_1^j,\\
\end{aligned}
\end{displaymath}
and
\begin{displaymath}
\begin{aligned}
\sum_{j=1}^{M}[G(\rho_1,\eta_1,j)] < & \sum_{j=1}^{M}R^*r_1^j\\
< &\sum_{j=1}^{H}R^*r_1^j\\
= &R^*[r_1\frac{1-r_1^H}{1-r_1}],
\end{aligned}
\end{displaymath}
\begin{displaymath}
\begin{aligned}
& \frac{LZD(\rho_1)}{\beta}+\sum_{j=1}^{M}[G(\rho_1,\eta_1,j)
< \frac{LZD(\rho_1)}{\beta}+R^*[r_1\frac{1-r_1^H}{1-r_1}]
= & G^*
\end{aligned}
\end{displaymath}
\end{proof}
\section{Proof of Lemma 8}
\begin{proof}
By Lemma 5, for any $j$,
\begin{displaymath}
H(\eta^*,\rho_1,j) = (\frac{R_0r_2-R_1}{r_2-r_1}r_1^j+\frac{R_0r_1-R_1}{r_1-r_2}r_2^j)-\frac{F(\eta^*)}{D(\rho_1)+2\eta_1-1}.
\end{displaymath}
Note that $\rho_1$, $\eta_1$ and $F(\eta^*)$ are constants, $H(\eta^*,\rho_1,j)$ can be rewritten as
\begin{displaymath}
H(\eta^*,\rho_1,j)=c_1r_1^j+c_2r_2^j-c_3,
\end{displaymath}
where $c_1 = \frac{R_0r_2-R_1}{r_2-r_1}$, $c_2 = \frac{R_0r_1-R_1}{r_1-r_2}$, $c_3 =\frac{F(\eta^*)}{D(\rho_1)+2\eta_1-1}$.\\
We first consider $c_3$. Note that
\begin{displaymath}
\begin{aligned}
F(\eta^*)=& \mathop{\rm{max}}_{
        n \in(0,1,\cdots,j-1)
        \atop
        \eta \in [\eta_l,\eta_u]} \eta^{n}[\rho_1 LZ+\frac{\rho_1 Z}{\eta-L}L]-\frac{\rho_1 Z}{\eta-L}L^{n+1}\\
\geq & [\rho_1 LZ+\frac{\rho_1 Z}{\eta_1-L}L]-\frac{\rho_1 Z}{\eta_1-L}L\\
= & \rho_1 LZ\\
> & 0,\\
\end{aligned}
\end{displaymath}
and by definition, $D(\rho_1)+2\eta_1-1>0$, $c_3>0$. Therefore
\begin{displaymath}
H(\eta^*,\rho_1,j)<c_1r_1^j+c_2r_2^j.
\end{displaymath}
Now consider $c_1$. The denominator $r_1-r_2<0$, for the numerator, note that
\begin{displaymath}
\begin{aligned}
R_0 = & \frac{F(\eta^*)}{D(\rho_1)+2\eta_1-1}\\
R_1 = & F(\eta^*)\frac{1}{\eta_1}[1+\frac{1}{D(\rho_1)+2\eta_1-1}],\\
\end{aligned}
\end{displaymath}
and due to $D(\rho_1)>1$ and $\eta_1>0$,
\begin{displaymath}
D(\rho_1)+2\eta_1-1>0.
\end{displaymath}
Therefore $R_0>0$ and $R_1>0$, and the numerator of $c_1$, $R_0r_2-R_1<0$ . Consequently, $c_1>0$.
Then consider $r_1$ and $r_2$. By Vieta's theorem, we have
\begin{displaymath}
\begin{aligned}
&r_1+r_2 = D(\rho_1)+\eta_1,\\
&r_1r_2 = -\eta_1.\\
\end{aligned}
\end{displaymath}
By the definition of $D(\rho)$, $r_1>0$, $r_2<0$ and $r_1+r_2>1$. Therefore $r_1>1$ and $r_1>|r_2|$. 
Therefore
\begin{displaymath}
\begin{aligned}
H(\eta^*,\rho_1,j)< &c_1r_1^j+c_2r_2^j\\
< &c_1r_1^j+|c_2r_2^j|\\
= &c_1r_1^j+|c_2||r_2|^j\\
< &c_1r_1^j+|c_2||r_1|^j\\
=&(c_1+|c_2|)r_1^j\\
=&R^{*'}r_1^j,\\
\end{aligned}
\end{displaymath}
and \\
\begin{displaymath}
\begin{aligned}
\sum_{j=1}^{M}[H(\eta^*,\rho_1,j)] < & \sum_{j=1}^{M}[R^{*'}r_1^j]\\
< &\sum_{j=1}^{H}[R^{*'}r_1^j]\\
= &R^{*'}[r_1\frac{1-r_1^H}{1-r_1}]\\
= &H^*.
\end{aligned}
\end{displaymath}
\end{proof}
\section{Proof of Theorem 9}
\begin{proof}
The pseudo-dimension of
\begin{displaymath}
\mathcal{A}_{N_\rho,N_\eta}=\{\mathit{A}_{\rho,\eta}, \rho \in N_\rho, \eta \in N_\eta\} 
\end{displaymath}
is at most $[\log(\nu/LZ)/\log(1-\beta)]\log(|N_\rho|)\log(|N_\eta|)$. Moreover, since $\mathcal{A}_{N_\rho,N_\eta}$ is finite and $\log|N_{\rho}|\sim \tilde{O}(H)$ and $\log|N_{\eta}|\sim \tilde{O}(H)$, it also trivially admits an ERM algorithm $L_{N_\rho,N_\eta}$, which $(\epsilon,\delta)$-learns the optimal algorithm in $\mathcal{A}_{N_\rho,N_\eta}$ using $m=\tilde{O}(H^2\log(|N_\rho|)\log(|N_\eta|)/\epsilon^2)$ samples.
\end{proof}


\end{document}